\newtheorem{thm}{\bf Theorem}[section]
\newtheorem{lem}{\bf Lemma}[section]
\theoremstyle{remark}
\newtheorem{rem}{\bf Remark}[section]
\newtheorem{Def}{\bf Definition}[section]
\newtheorem{example}{\bf Example}[section]
\DeclareMathOperator*{\sgn}{sign}
\begin{document}
\color{red}
\title{\Huge $n$-dimensional ratio-dependent predator-prey systems with memory}
\color{black}
\bigskip
\bigskip
\bigskip
\author{
by\\ Krisztina Kiss\footnote{E-mail: kk@math.bme.hu }
{\ }and J\'anos T\'oth\footnote{E-mail: jtoth@math.bme.hu }\\
Institute of Mathematics, Budapest University of Technology and
Economics,\\ H-1111 Budapest, Hungary\\
\\\\
Dedicated to the memory of Professor Mikl\'os Farkas }
\date{}

\maketitle
\newpage

\begin{abstract}
This paper deals with ratio-dependent predator-prey systems with
delay. We will investigate under what conditions delay cannot cause
instability in higher dimension. We give an example when delay
causes instability.

\end{abstract}
\pagestyle{myheadings} \markboth{\centerline {\scriptsize K.
Kiss}}{\centerline {\scriptsize Ratio-dependent predator-prey
systems with memory}}

\noindent{\it Key words and phrases}: predator-prey system,
functional response, sign stability, ratio dependence, delay

\newpage
\section{Introduction}

Let us consider the following ratio-dependent ecological system, in
which $n$ different predator species (the $i$-th predator quantities
at time $t$ are denoted by $y_i(t)$, $i=1,2,\dots, n$ respectively)
are competing for a single prey species (the quantity of prey at
time $t$ is denoted by $x(t)$):
\begin{equation}\label{eq:1r}
\left.\begin{array}{lll}
\dot{x}&=&rxg(x,K)-\displaystyle\sum_{i=1}^n y_ip_i\left(\frac{y_i}{x}\right)\\
\dot{y}_i&=&y_ip_i\displaystyle\left(\frac{y_i}{x}\right)-d_iy_i,\quad
i=1,2,\dots,n
\end{array}\right\}.
\end{equation}
where dot means differentiation with respect to time $t$. We assume
that the per capita growth rate of prey in absence of predators is
$rg(x,K)$ where $r$ is a positive constant (in fact the maximal
growth rate of prey), $K>0$ is the carrying capacity of environment
with respect to the prey, the function $g$ satisfies some natural
conditions, see the details in \cite{KK-KS}. For example one of
these conditions is the following:
\begin{equation}\label{cond:4g}
(K-x)g(x,K)>0, \;\;x\geq 0, \;\;x\neq K.
\end{equation}
Such a function $g$ is the so called logistic growth rate of
prey
\begin{equation}\label{5g}
g(x,K)=1-\frac{x}{K}.
\end{equation}
We assume further that the death rate $d_i>0$ of predator $i$ is
constant and the per capita birth rate of the same predator is
$p_i\left(\displaystyle\frac{y_i}{x}\right)$, where the function
$p_i$ also satisfies some natural conditions, see also in
\cite{KK-KS}.


In that paper we have already investigated the system  with the
Michaelis--Menten  or Holling type functional response in case of
ratio-dependence:
\begin{equation}\label{13p}
p_i\left(\frac{y_i}{x},a_i\right)=m_i\frac{x}{a_iy_i+x}.
\end{equation}
and with the ratio-dependent Ivlev functional response:
\begin{equation}\label{14p}
p_i\left(\frac{y_i}{x},a_i\right)=m_i\left(1-e^{-\frac{x}{a_iy_i}}\right),
\end{equation}
where parameter $a_i$ is the so called "half-saturation constant",
namely in the case where $p_i$ is a bounded function for fixed
$a_i>0$, $m_i=\sup\limits_{x,y_i>0} p_i(x,y_i,a_i)$ is the "maximal
birth rate" of the $i$-th predator. That means, if the functional
response is a Holling-type without ratio-dependence then $a_i$ means
the quantity of prey at which the birth rate of predator $i$ is half
of its supremum. In case of a ratio-dependent Holling model $a_i$
means a proportion of prey to predator at which the birth rate is
half of its supremum. In case of an Ivlev model the meaning of $a_i$
is similar to the earlier, see the details in \cite{KK-KS}. (To save
space we did not write out the dependence on $a_i$ in
(\ref{eq:1r}).) For the survival of predator $i$ it is, clearly,
necessary that the maximal birth rate be larger, than the death
rate:
\begin{equation}\label{const:9}
m_i>d_i.
\end{equation}
This will be assumed in the sequel. Finally, we assume that the
presence of predators decreases the growth rate of prey by the
amount equal to the birth rate of the respective predator.\\

\section{Model with delay}

We get a more realistic model if we take into account that the
predators' growth rate at present depend on past quantities of prey
and therefore a continuous weight (or density) function $f$ is
introduced whose role is to weight moments of the past. Function $f$
satisfies the requirements:
\begin{equation}\label{q1}
f(s)\geq 0, \;\; s\in (0, \infty); \;\; \int_0^{\infty}f(s) ds =1,
\end{equation}
and $x(t)$ is replaced in the growth rate of predator $i$ by its
weighted average over the past:
\begin{equation}\label{q2}
q(t):=\int_{-\infty}^t x(\tau)f(t-\tau) d\tau.
\end{equation}
This means that the time average of prey quantity over the past has
the same fading influence on the present growth rates of different
predators. The simplest choice is $f(s)=\alpha e^{-\alpha s}$, with
$\alpha
>0$. This function satisfies the condition (\ref{q1}) and now
\begin{equation}\label{qe}
q(t)=\int_{-\infty}^t x(\tau)\alpha e^{-\alpha (t-\tau)} d\tau.
\end{equation}
We call this choice of $f$ exponentially fading memory, see in
\cite{Cushing}, \cite{Macdonald}; later in \cite{FM BIOL}.
(Since \(f\) is the probability density of an exponentially
distributed random variable,
the probabilistic interpretation is obvious.)
The smaller $\alpha
>0$ is the longer is the time interval in the past in which the
values of $x$ are taken into account, i.e. $\frac{1}{\alpha}$ is the
"measure of the influence of the past". It is easy to see that with
this special delay, system (\ref{eq:1r}) is equivalent to the
following system of ordinary differential equations:
 \begin{equation}\label{eq:1r d}
\left.\begin{array}{lll}
\dot{x}&=&rxg(x,K)-\displaystyle\sum_{i=1}^n y_ip_i\left(\frac{y_i}{x}\right)\\
\dot{y}_i&=&y_ip_i\displaystyle\left(\frac{y_i}{q}\right)-d_iy_i,\quad
i=1,2,\dots,n \\
\dot{q}&=&\alpha (x-q)
\end{array}\right\},
\end{equation}
where function $p_i(\frac{y_i}{q})$ can be (\ref{13p}),(\ref{14p})
or any kind of general ratio-dependent functional response if we
replace $x(t)$ by the time average $q(t)$ of prey quantity over the
past. Similar systems have been studied by many authors in the
two-dimensional case, specially in \cite{MC-FM1}, and also with
diffusion in \cite{Lizana1}. In \cite{MC-FM1} the functional
response was of the simplest Holling-type one without ratio-dependence
and in \cite{Lizana1} the functional response was of the
Michaelis--Menten-type with ratio-dependence and also with diffusion. Our
aim in this paper is to study the effect of exponentially fading
memory in case of a general ratio-dependent
functional response with more than one different predators.\\
The qualitative behaviour of (\ref{eq:1r}) was studied in
\cite{KK-KS}, where it has been supposed that there exists an
equilibrium point $E^*(x^*,y_1^*,\dots,y_n^*)$ in the positive
orthant, where $x^*$, and $y_i^*$ are the solutions of the following
equations:
\begin{equation}\label{xr}
rxg(x,K)=\sum_{i=1}^n d_iy_i,\quad
p_i\displaystyle\left(\frac{y_i}{x}\right)=d_i, \quad i=1, \ldots,
n.
\end{equation}
Note that $x^*>0$ if and only if $K>x^*$ because of (\ref{cond:4g}).\\
The coefficient matrix of the system (\ref{eq:1r}) linearized at
$E^*$ is:

\newcommand{\egyik}[1]{y_{#1}^*p'^*_{#1}\frac{1}{x^*} }
\newcommand{\masik}[1]{y_{#1}^*p'^*_{#1}(-\frac{y_{#1}^*}{{x^*}^2})}


\begin{equation}\label{mxr}
A=\left[\begin{array}{cccccc}
a_{11}       & -d_1-\egyik{1}&-d_1-\egyik{2}& \dots&     \dots &-d_n -\egyik{n}\\
\masik{1}    &      \egyik{1}&             0& \dots&     \dots &              0\\
\masik{2}    &              0&     \egyik{2}& \dots&      \dots&              0\\
\vdots       &         \vdots&        \vdots&\vdots&     \vdots&         \vdots\\
\masik{{n-1}}&              0&             0& \dots&\egyik{n-1}&              0\\
\masik{n}    &              0&             0& \dots&           0&      \egyik{n}
\end{array}
\right]
\end{equation}
where
\begin{eqnarray}\label{a11}
a_{11}&=&r g(x^*,K)+rx^*g'_x(x^*,K)-\sum_{i=1}^n\masik{i},\\
p'^*_i&=&p'_i\displaystyle\left(\frac{{y_i}^*}{x^*}\right); \;\;
p'_i\displaystyle\left(\frac{y_i}{x}\right)=
\frac{dp_i\left(\frac{y_i}{x}\right)}{d \left(\frac{y_i}{x}\right)}.
\end{eqnarray}
An $n\times n$ matrix $A=[a_{ij}]$ is said to be stable if each of its eigenvalues
has a negative real part. The following definition can be found in
\cite{vdDriessche}:
\begin{Def}
An $n\times n$ matrix $A=[a_{ij}]$ is called sign-stable if each
matrix $\tilde{A}$ of the same sign-pattern as $A$ ($\sgn
\tilde{a}_{ij}=\sgn a_{ij}$ for all $i,j$) is stable.
\end{Def}
It was proven in \cite{KK-KS} the following:
\begin{thm}\label{sign th}
If
\begin{equation} \label{condst1r}
a_{11}\leq 0,
\end{equation}
\begin{equation} \label{condst2r}
{p'_i}^*=p'_i\left(\frac{{y_i}^*}{x^*}\right)< 0, \quad i=1, \ldots,
n,
\end{equation}
and
\begin{equation} \label{condst3r}
-d_i-{y_i}^*{p'_i}^*\frac{1}{x^*}= -d_i
-{y_i}^*p'_i\left(\frac{{y_i}^*}{x^*}\right)\frac{1}{x^*}< 0, \quad
i=1, \ldots, n
\end{equation}
then matrix (\ref{mxr}) is sign-stable, thus, $E^*$ is an
asymptotically stable equilibrium point of system (\ref{eq:1r}).
\end{thm}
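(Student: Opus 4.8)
The plan is to reduce the assertion to a combinatorial fact about the sign pattern of $A$ and then either apply the characterisation of sign-stability recorded in \cite{vdDriessche} or, more self-containedly, prove stability directly for an arbitrary matrix of that pattern. First I would read off $\sgn a_{ij}$ for every entry of (\ref{mxr}) from the three hypotheses. Since $x^*>0$ and $y_i^*>0$, hypothesis (\ref{condst2r}) forces $\masik{i}>0$ in the first column and $\egyik{i}<0$ on the diagonal; hypothesis (\ref{condst3r}) makes every first-row entry $-d_i-\egyik{i}$ strictly negative; and (\ref{condst1r}) gives $a_{11}\le 0$. All remaining entries of (\ref{mxr}) vanish. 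Thus $A$ has the ``arrowhead'' sign pattern: a nonpositive $(1,1)$ entry, a strictly negative diagonal in the predator block, a strictly negative first row, a strictly positive first column, and zeros elsewhere.

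I would then verify the hypotheses of the sign-stability criterion of \cite{vdDriessche} for this pattern: (i) the diagonal is nonpositive and contains a negative entry (indeed $n$ of them); (ii) for each predator index $i$ the product $a_{1,i+1}a_{i+1,1}$ is a negative number times a positive one, hence $<0$, while every other off-diagonal product vanishes; (iii) the associated signed digraph consists only of the $2$-cycles joining the prey vertex to each predator vertex, so it contains no directed cycle of length $\ge 3$; and (iv) $A$ is sign-nonsingular, as seen by expanding the arrowhead determinant,
\[
\det A=(-1)^{n+1}\Bigl(|a_{11}|\prod_{i=1}^{n}|\egyik{i}|+\sum_{i=1}^{n}|a_{1,i+1}a_{i+1,1}|\prod_{j\neq i}|\egyik{j}|\Bigr),
\]
which is a sum of same-signed terms, nonzero for every matrix of this pattern; the remaining colouring condition is immediate for a star-shaped graph. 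The criterion then gives that $A$ is sign-stable, and since a sign-stable matrix is in particular stable, the principle of linearised stability yields that $E^*$ is an asymptotically stable equilibrium of (\ref{eq:1r}).

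For completeness I would also give the direct argument. Let $\tilde A$ be any matrix with the sign pattern above, and write $\tilde a_{11}=-p$ with $p\ge 0$, $\tilde a_{1,i+1}=-\beta_i$, $\tilde a_{i+1,1}=\gamma_i$, $\tilde a_{i+1,i+1}=-\delta_i$ with $\beta_i,\gamma_i,\delta_i>0$. The Schur-complement expansion of the arrowhead matrix $\lambda I-\tilde A$ gives, for $\lambda\notin\{-\delta_1,\dots,-\delta_n\}$,
\[
\det(\lambda I-\tilde A)=\prod_{i=1}^{n}(\lambda+\delta_i)\Bigl(\lambda+p+\sum_{i=1}^{n}\frac{\beta_i\gamma_i}{\lambda+\delta_i}\Bigr).
\]
If $\operatorname{Re}\lambda\ge 0$ then each $\lambda+\delta_i\neq 0$, $\operatorname{Re}(\lambda+p)\ge 0$, and $\operatorname{Re}\frac{\beta_i\gamma_i}{\lambda+\delta_i}=\beta_i\gamma_i\frac{\operatorname{Re}\lambda+\delta_i}{|\lambda+\delta_i|^{2}}>0$, so the bracketed factor has strictly positive real part; hence $\det(\lambda I-\tilde A)\neq 0$ and $\tilde A$ has no eigenvalue in the closed right half-plane. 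The only point that needs care in either route is the bookkeeping of signs --- in particular, that the predator--prey feedback products $\beta_i\gamma_i$ are genuinely positive and cannot cancel, which is precisely what (\ref{condst2r})--(\ref{condst3r}) guarantee; everything else is routine.
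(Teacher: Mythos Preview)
Your proof is correct. Note, however, that the paper itself does not prove this theorem: it is stated with the preamble ``It was proven in \cite{KK-KS} the following'', and no argument is supplied here. So there is no proof in the present paper to compare against.

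Both of your routes are sound. The first --- verifying the Jeffries--Klee--van den Driessche criterion for the star-shaped sign pattern --- is presumably close in spirit to what \cite{KK-KS} does, since that reference works in the sign-stability framework and is the source the authors cite. Your second, direct argument via the arrowhead Schur complement
\[
\det(\lambda I-\tilde A)=\prod_{i=1}^{n}(\lambda+\delta_i)\Bigl(\lambda+p+\sum_{i=1}^{n}\frac{\beta_i\gamma_i}{\lambda+\delta_i}\Bigr)
\]
is more self-contained and arguably cleaner: it shows outright that every matrix of this pattern has spectrum in the open left half-plane, without invoking the combinatorial characterisation. The only mild looseness is the phrase ``the remaining colouring condition is immediate for a star-shaped graph'' in the first route; it is true, but if you want that route to stand on its own you should name the test and note that every predator vertex carries a strictly negative diagonal entry, which is what makes the colour test trivial. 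The direct half-plane argument sidesteps this entirely.
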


Now, let us suppose that there exists a positive equilibrium point
$E^*(x^*,y_1^*,\dots,y_n^*)$ of system (\ref{eq:1r}), then
with the definition \(q^*:=x^*\) and
$E^*_d(x^*,y_1^*,\dots,y_n^*, q^*)$ we get an equilibrium point of (\ref{eq:1r d}) in the
positive orthant.
And again $x^*>0$ if and only if $K>x^*$.\\
The coefficient matrix of system (\ref{eq:1r d}) linearized at
$E^*_d$ is:
\begin{equation}\label{mxrd} A_d=\left[\begin{array}{cccccc}
a_{11}& -d_1-\egyik{1}&-d_2-\egyik{2} \dots & \dots &  -d_n -\egyik{n}&0\\
0 &\egyik{1} & 0 &\dots &0&\masik{1}\\
0&0&\egyik{2}&\dots&0&\masik{2}\\
\vdots&\vdots&\vdots&\vdots&\vdots&\vdots\\
0& \dots&\dots&
0&\egyik{n}&\masik{n}\\
\alpha & 0& \dots&\dots&0&-\alpha
\end{array}
\right]
\end{equation}
where $a_{11}$ is given by \eqref{a11} and again
${p'_i}^*=p'_i\displaystyle\left(\frac{{y_i}^*}{x^*}\right); \;\;
 p'_i\displaystyle\left(\frac{y_i}{x}\right)=\frac{d
p_i\left(\frac{y_i}{x}\right)}{d \left(\frac{y_i}{x}\right)}$.\\
We note that (\ref{mxrd}) can not be sign-stable because its graph
have cycles. (See in \cite{vdDriessche}.)

Let us restrict the number of predators to two.

\subsection{One prey two predators with delay}\label{3dimcase}

Let us consider system (\ref{eq:1r d}) in case of $n=2$. We suppose
that (\ref{condst1r}),(\ref{condst2r}), (\ref{condst3r}) hold for
$i=1,2$. In this special case the entries of matrix $A_d$ are
$a_{11}\leq 0$, $a_{22}, a_{33}<0$, $a_{12}, a_{13}<0$, $a_{24},
a_{34}>0$, $a_{41}=\alpha>0$, $a_{44}=-\alpha<0$. This means that
$A_d$ has the following sign pattern:
\begin{equation}\label{pattern}
A_d=\left[\begin{array}{cccc} -/0 & - & -& 0\\
0 & - & 0 &+\\
0 & 0 & -&+\\
\alpha&0&0&-\alpha
\end{array}
\right].
\end{equation}
The characteristic polynomial of a matrix with the same sign pattern as
(\ref{pattern}) is:
\begin{equation}\label{cp4}
D(\lambda)=\lambda^4+a_3\lambda^3+a_2\lambda^2+a_1\lambda+a_0
\end{equation}
with $$a_3=-a_{11}-a_{22}-a_{33}+\alpha,$$
$$a_2=a_{11}a_{22}+a_{11}a_{33}+a_{22}a_{33}-\alpha(a_{11}+a_{22}+a_{33}),$$
$$a_1=-a_{11}a_{22}a_{33}+\alpha (a_{11}a_{22} +a_{11}a_{33} +a_{22}a_{33})-\alpha(a_{12}a_{24}+a_{13}a_{34}),$$
$$a_0=\det A_d=\alpha(-a_{11}a_{22}a_{33}+a_{22}a_{13}a_{34}+a_{33}a_{12}a_{24}).$$
It is known that the necessary condition of stability of the polynomial \(D(\lambda)\) is
$a_{i}>0, \;\;i=0,1,2,3$.
\begin{lem}\label{Hn:lem}
If $A_d$ has the same sign pattern as (\ref{pattern}) then the above
necessary conditions of stability are satisfied for all $\alpha>0$.
\end{lem}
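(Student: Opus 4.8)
The plan is to verify the four inequalities $a_i>0$, $i=0,1,2,3$, directly from the sign pattern (\ref{pattern}) by elementary sign-chasing; no estimate beyond tracking signs is needed. First I would record the signs of all products that occur in $a_0,\dots,a_3$. Since $a_{11}\le 0$ and $a_{22},a_{33}<0$, we get $a_{11}a_{22}\ge 0$, $a_{11}a_{33}\ge 0$, $a_{22}a_{33}>0$, the triple $a_{11}a_{22}a_{33}\le 0$, and $a_{11}+a_{22}+a_{33}<0$. From $a_{12},a_{13}<0$ and $a_{24},a_{34}>0$ we get $a_{12}a_{24}<0$, $a_{13}a_{34}<0$, while the mixed triples satisfy $a_{22}a_{13}a_{34}>0$ and $a_{33}a_{12}a_{24}>0$.

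Then I would check the coefficients one at a time. For $a_3=-a_{11}-a_{22}-a_{33}+\alpha$ every summand is $\ge 0$ and $-a_{22}>0$, so $a_3>0$. For $a_2=a_{11}a_{22}+a_{11}a_{33}+a_{22}a_{33}-\alpha(a_{11}+a_{22}+a_{33})$ the first three terms are $\ge 0$ with $a_{22}a_{33}>0$, and $-\alpha(a_{11}+a_{22}+a_{33})>0$, so $a_2>0$. For $a_1=-a_{11}a_{22}a_{33}+\alpha(a_{11}a_{22}+a_{11}a_{33}+a_{22}a_{33})-\alpha(a_{12}a_{24}+a_{13}a_{34})$ the first term is $\ge 0$, the bracket multiplying $\alpha$ is $>0$ as just seen, and $-\alpha(a_{12}a_{24}+a_{13}a_{34})>0$, so $a_1>0$. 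Finally $a_0=\alpha\bigl(-a_{11}a_{22}a_{33}+a_{22}a_{13}a_{34}+a_{33}a_{12}a_{24}\bigr)$ is $\alpha>0$ times the sum of a nonnegative term and two strictly positive terms, so $a_0>0$. Since none of these arguments uses any upper bound on $\alpha$, the conclusion holds for all $\alpha>0$.

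There is no genuine obstacle here; the only point demanding a moment's care is the non-strict inequality $a_{11}\le 0$, so I would explicitly note that in each coefficient a strictly positive term survives even when $a_{11}=0$ (namely $-a_{22}$ in $a_3$, $a_{22}a_{33}$ in $a_2$, $-\alpha(a_{12}a_{24}+a_{13}a_{34})$ in $a_1$, and $a_{22}a_{13}a_{34}+a_{33}a_{12}a_{24}$ in $a_0$). Because within each coefficient all terms share the same (weak) sign, no cancellation is possible and the strict positivity is immediate, which completes the proof.
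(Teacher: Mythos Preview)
Your proof is correct and is precisely the ``elementary calculation'' the paper alludes to: the paper's own proof consists of the single sentence that checking $a_i>0$ for $i=0,1,2,3$ is elementary, and you have carried out exactly that sign-chasing, including the care needed for the borderline case $a_{11}=0$.
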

\begin{proof}
It is an elementary calculation to prove $a_{i}>0, \;\;i=0,1,2,3$,
for all $\alpha>0$.
\end{proof}
Sufficient condition of stability of matrix $A_d$ in this case is:
\begin{equation}\label{Hn:eq}
a_3(a_1a_2-a_0a_3)-a_1^2>0
\end{equation}
See for example Theorem 1.4.8 in \cite{FMpermo}. It leads to a very
complicated formula. In order to check this we used Wolfram
Mathematica 6.0. http://www.wolfram.com.
We got:\\
$H(\alpha)=a_3(a_1a_2-a_0a_3)-a_1^2=$\\
 $(-a_{22} a_{11}^2-a_{33} a_{11}^2-a_{22}^2 a_{11}-a_{33}^2
a_{11}+a_{12} a_{24} a_{11}-2 a_{22} a_{33} a_{11}+a_{13} a_{34}
   a_{11}$
   $-a_{22} a_{33}^2+a_{12} a_{22} a_{24}-a_{22}^2 a_{33}+a_{13} a_{33} a_{34}) \; \alpha^3$\\
  $ +(a_{22} a_{11}^3+a_{33}
   a_{11}^3+2 a_{22}^2 a_{11}^2+2 a_{33}^2 a_{11}^2-a_{12} a_{24} a_{11}^2+4 a_{22} a_{33} a_{11}^2-a_{13} a_{34}
   a_{11}^2+a_{22}^3 a_{11}+a_{33}^3 a_{11}+4 a_{22} a_{33}^2 a_{11}-a_{12} a_{22} a_{24} a_{11}+4 a_{22}^2 a_{33} a_{11}+a_{12}
   a_{24} a_{33} a_{11}+a_{13} a_{22} a_{34} a_{11}-a_{13} a_{33} a_{34} a_{11}+a_{22} a_{33}^3-a_{12}^2 a_{24}^2+2 a_{22}^2
   a_{33}^2+a_{12} a_{24} a_{33}^2-a_{13}^2 a_{34}^2-a_{12} a_{22}^2 a_{24}+a_{22}^3 a_{33}+a_{12} a_{22} a_{24} a_{33}+a_{13}
   a_{22}^2 a_{34}-a_{13} a_{33}^2 a_{34}-2 a_{12} a_{13} a_{24} a_{34}+a_{13} a_{22} a_{33} a_{34})
   \; \alpha^2$\\
   $+(-a_{22}^2 a_{11}^3-a_{33}^2 a_{11}^3-2 a_{22} a_{33} a_{11}^3-a_{22}^3 a_{11}^2-a_{33}^3 a_{11}^2-4 a_{22} a_{33}^2
   a_{11}^2+a_{12} a_{22} a_{24} a_{11}^2-4 a_{22}^2 a_{33} a_{11}^2+a_{13} a_{33} a_{34} a_{11}^2-2 a_{22} a_{33}^3 a_{11}-4
   a_{22}^2 a_{33}^2 a_{11}-a_{12} a_{24} a_{33}^2 a_{11}+a_{12} a_{22}^2 a_{24} a_{11}-2 a_{22}^3 a_{33} a_{11}-a_{12} a_{22}
   a_{24} a_{33} a_{11}-a_{13} a_{22}^2 a_{34} a_{11}+a_{13} a_{33}^2 a_{34} a_{11}-a_{13} a_{22} a_{33} a_{34} a_{11}-a_{22}^2
   a_{33}^3-a_{12} a_{24} a_{33}^3-a_{22}^3 a_{33}^2-a_{12} a_{22} a_{24} a_{33}^2-a_{13} a_{22}^3 a_{34}-a_{13} a_{22}^2 a_{33}
   a_{34}) \; \alpha$ \\
   $+a_{11} a_{22}^2 a_{33}^3+a_{11}^2 a_{22} a_{33}^3+a_{11} a_{22}^3 a_{33}^2+2 a_{11}^2 a_{22}^2
   a_{33}^2+a_{11}^3 a_{22} a_{33}^2+a_{11}^2 a_{22}^3 a_{33}+a_{11}^3 a_{22}^2
   a_{33}$



\begin{lem}\label{im}
If matrix (\ref{mxrd}) in case of $n=2$ has a pure imaginary eigenvalue
then in \eqref{Hn:eq} the expression at left hand side is equal to
zero.
\end{lem}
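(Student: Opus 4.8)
The plan is the standard Routh--Hurwitz elimination for a quartic: substitute the candidate pure imaginary eigenvalue into the characteristic polynomial \eqref{cp4}, split into two real equations in the frequency, and then eliminate the frequency.

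First I would set $\lambda=\mathrm{i}\omega$ with $\omega\in\R$, and record two non-degeneracy facts that come for free from Lemma~\ref{Hn:lem}: since $a_0=\det A_d>0$ for every $\alpha>0$, the value $\lambda=0$ is not an eigenvalue of $A_d$, so $\omega\neq 0$; and since $a_3>0$ we have $a_3\neq 0$. Plugging $\lambda=\mathrm{i}\omega$ into $D(\lambda)=\lambda^4+a_3\lambda^3+a_2\lambda^2+a_1\lambda+a_0$ and separating the real and imaginary parts of $D(\mathrm{i}\omega)=0$ gives
\[
\omega^4-a_2\omega^2+a_0=0,\qquad \omega\bigl(a_1-a_3\omega^2\bigr)=0 .
\]
Because $\omega\neq 0$, the second equation forces $\omega^2=a_1/a_3$. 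Substituting this into the first equation and clearing the denominator $a_3^2$ yields
\[
a_1^2-a_1a_2a_3+a_0a_3^2=0 ,
\]
i.e. $a_3(a_1a_2-a_0a_3)-a_1^2=0$, which is exactly the statement that the left-hand side $H(\alpha)$ of \eqref{Hn:eq} vanishes.

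I do not expect a genuine obstacle here; the computation is elementary. The only places where one could slip are dividing by $\omega$ or by $a_3$, and both are legitimate precisely because of Lemma~\ref{Hn:lem} ($a_0>0$ gives $\omega\neq 0$, $a_3>0$ gives $a_3\neq 0$). I would optionally close by remarking that the converse implication also holds: if $a_i>0$ for $i=0,1,2,3$ and $H(\alpha)=0$, then $\omega^2=a_1/a_3>0$ and $\lambda=\pm\mathrm{i}\sqrt{a_1/a_3}$ are roots of $D$; this is why $H(\alpha)=0$ is the natural candidate for the value of $\alpha$ at which stability of $E^*_d$ is lost, even though only the stated direction is needed in the sequel.
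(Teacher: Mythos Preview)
Your proof is correct and follows essentially the same route as the paper: substitute $\lambda=\mathrm{i}\omega$ into the quartic \eqref{cp4}, split into real and imaginary parts, solve the imaginary part for $\omega^2=a_1/a_3$, and plug back into the real part to obtain $a_3(a_1a_2-a_0a_3)-a_1^2=0$. You have merely been more careful than the paper in justifying $\omega\neq 0$ and $a_3\neq 0$ via Lemma~\ref{Hn:lem}, and your closing remark on the converse is a harmless extra.
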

\begin{proof}
If we substitute $j \omega$, $j^2=-1$, $\omega\neq 0$ into
(\ref{cp4}) we get $\omega ^2=\frac{a_1}{a_3}$ and
$a_3(a_1a_2-a_0a_3)-a_1^2=0.$
\end{proof}
As we can see by result of Wolfram Mathematica 6.0 the left hand
side of condition \eqref{Hn:eq} has the following form depending on
$\alpha$:
\begin{equation}\label{Hnalpha}
H(\alpha)=\tilde{A_3}\alpha^3+\tilde{A_2}\alpha^2+\tilde{A_1}\alpha+\tilde{A_0}
\end{equation}
\begin{lem}\label{Hncoef}
If $A_d$ has the same sign pattern as (\ref{pattern}) and $a_{11}<0$
then $\tilde{A_3}, \tilde{A_0}>0$.
\end{lem}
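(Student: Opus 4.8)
The plan is to write the two coefficients $\tilde{A_0}$ and $\tilde{A_3}$, read off from the explicit expression for $H(\alpha)$ above, in a factored form in which the sign pattern (\ref{pattern}) makes every factor's sign obvious. The whole argument then reduces to two algebraic identities plus a one-line sign count; no estimate is needed.

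First, for the constant term $\tilde{A_0}$ (the part of $H(\alpha)$ carrying no power of $\alpha$),
\begin{equation*}
\tilde{A_0}=a_{11} a_{22}^2 a_{33}^3+a_{11}^2 a_{22} a_{33}^3+a_{11} a_{22}^3 a_{33}^2+2 a_{11}^2 a_{22}^2 a_{33}^2+a_{11}^3 a_{22} a_{33}^2+a_{11}^2 a_{22}^3 a_{33}+a_{11}^3 a_{22}^2 a_{33},
\end{equation*}
I would pull out the common factor $a_{11}a_{22}a_{33}$ and recognise the remaining symmetric cubic; a direct expansion then confirms
\begin{equation*}
\tilde{A_0}=a_{11}a_{22}a_{33}\,(a_{11}+a_{22})(a_{22}+a_{33})(a_{33}+a_{11}).
\end{equation*}
Under (\ref{pattern}) with the hypothesis $a_{11}<0$ all three diagonal entries $a_{11},a_{22},a_{33}$ are negative, hence $a_{11}a_{22}a_{33}<0$; each pairwise sum $a_{11}+a_{22}$, $a_{22}+a_{33}$, $a_{33}+a_{11}$ is also negative, so their product is negative; the product of these two negative quantities is positive, i.e. $\tilde{A_0}>0$. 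This is precisely where strict negativity of $a_{11}$ (rather than only $a_{11}\leq 0$) is used: it is what keeps the factor $a_{11}a_{22}a_{33}$ away from $0$.

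For the coefficient $\tilde{A_3}$ of $\alpha^3$, I would split it into the part depending only on $a_{11},a_{22},a_{33}$ and the part containing $a_{12}a_{24}$ or $a_{13}a_{34}$, and factor each piece; I expect to reach
\begin{equation*}
\tilde{A_3}=-(a_{11}+a_{22})(a_{22}+a_{33})(a_{11}+a_{33})+a_{12}a_{24}(a_{11}+a_{22})+a_{13}a_{34}(a_{11}+a_{33}).
\end{equation*}
Each of the three summands is positive: the first is minus a product of three negative numbers; in the second, $a_{12}<0$ and $a_{24}>0$ give $a_{12}a_{24}<0$, times $a_{11}+a_{22}<0$; the third is analogous, with $a_{13}a_{34}<0$ and $a_{11}+a_{33}<0$. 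Hence $\tilde{A_3}>0$.

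The only non-trivial step is establishing the two factorizations, which are routine expansions — and, since the coefficients of $H(\alpha)$ were already produced with Wolfram Mathematica, can be verified there as well. The main thing to watch is getting those identities exactly right, because once they are in place the sign pattern (\ref{pattern}) finishes the proof immediately.
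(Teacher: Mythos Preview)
Your proof is correct: both factorizations check out against the Mathematica-generated coefficients in the paper, and the sign count under the pattern~(\ref{pattern}) with $a_{11}<0$ is exactly right. The paper's own proof is merely the one-line remark ``The proof is complete by elementary calculations,'' so your argument is not a different route but rather an explicit execution of what the paper leaves to the reader.
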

\begin{proof}
The proof is complete by elementary calculations.
\end{proof}
Lemma \ref{Hncoef} means that the function $H(\alpha)$ given by
(\ref{Hnalpha}) is positive, and monotone increasing or decreasing
depending on $\tilde{A_1}>0$ or $\tilde{A_1}<0,$ respectively; and
has a convex or concave down shape if $\tilde{A_2}>0$ or
$\tilde{A_2}<0$, respectively; at $\alpha=0.$

\begin{figure}[!ht]
\begin{center}
\includegraphics{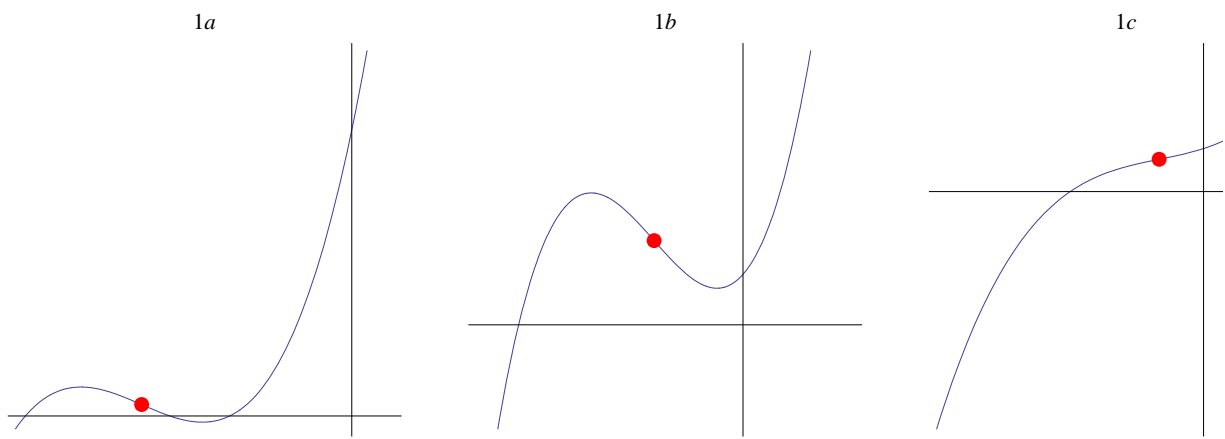}
\caption{\label{pozpoz}The value of \(\tilde{A_1}\) and of \(\tilde{A_2}\) is positive}
\end{center}
\end{figure}

\begin{figure}[!ht]
\begin{center}
\includegraphics{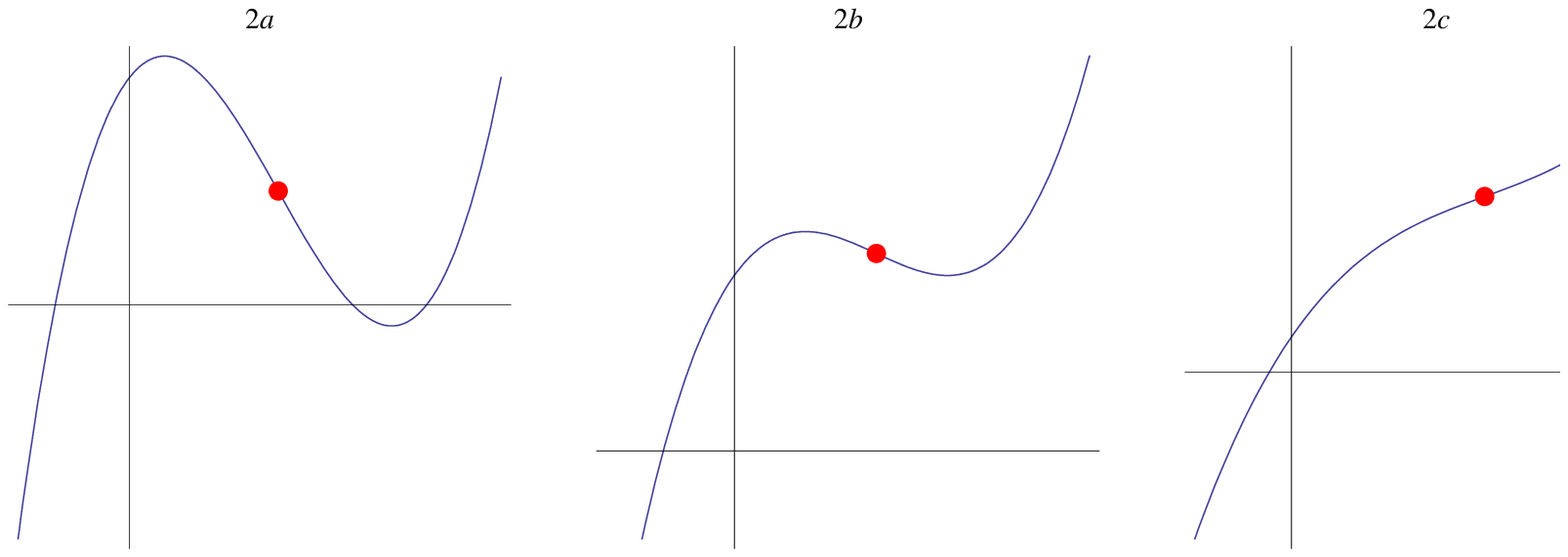}
\caption{\label{pozneg}The value of \(\tilde{A_1}\) is positive and of \(\tilde{A_2}\) is negative}
\end{center}
\end{figure}

\begin{figure}[!ht]
\begin{center}
\includegraphics{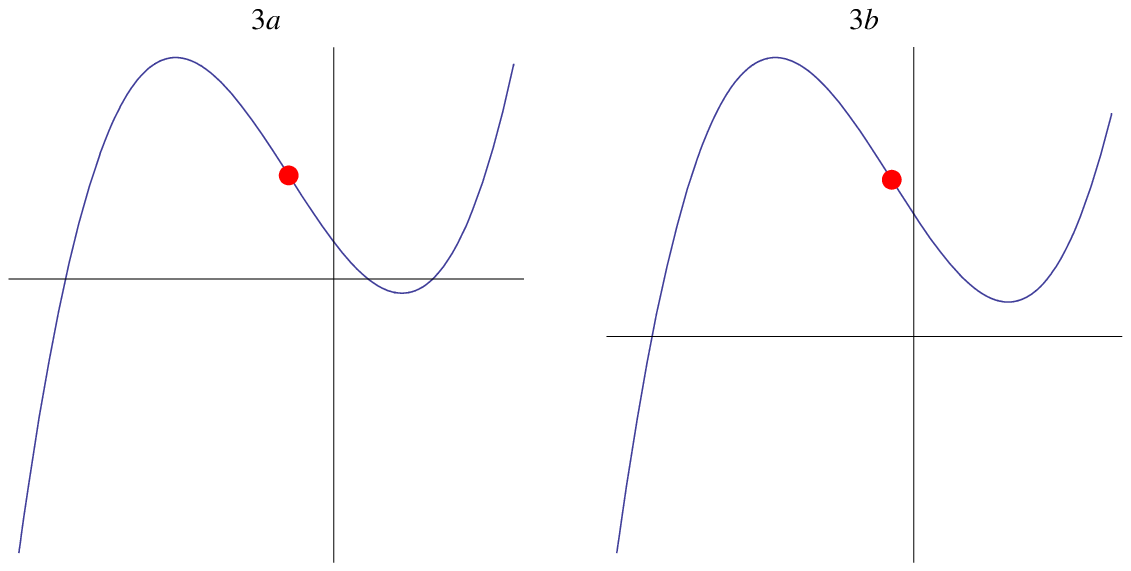}
\caption{\label{negpoz}The value of \(\tilde{A_1}\)  is negative and of \(\tilde{A_2}\) is positive}
\end{center}
\end{figure}

\begin{figure}[!ht]
\begin{center}
\includegraphics{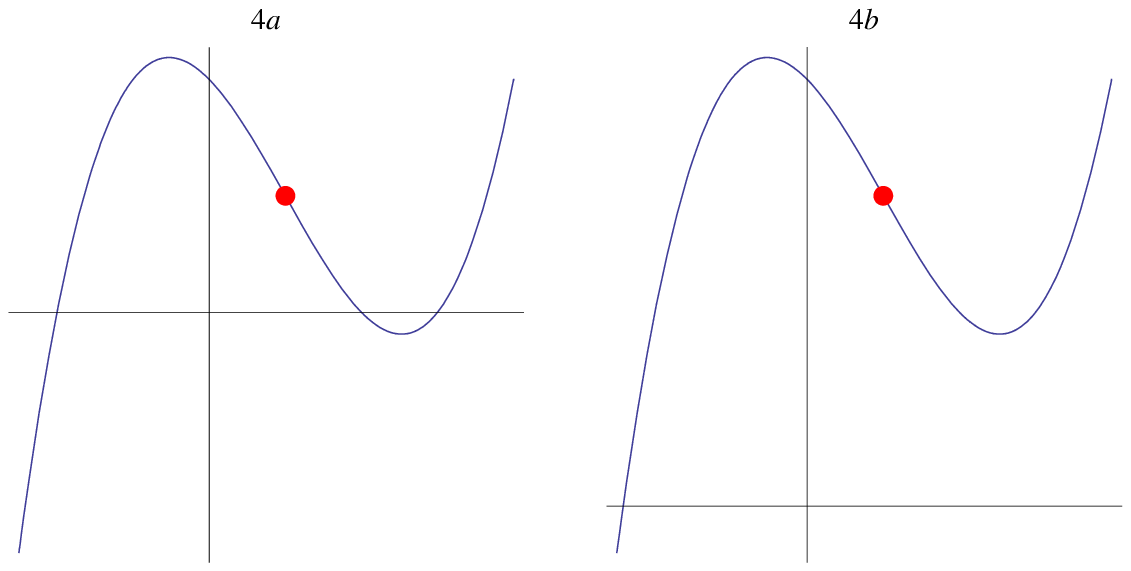}
\caption{\label{negneg}The value of \(\tilde{A_1}\) is negative and of \(\tilde{A_2}\)  is negative}
\end{center}
\end{figure}

Figures \ref{pozpoz}, \ref{pozneg}, \ref{negpoz}, \ref{negneg} show
that there are several cases when delay does not destabilize the
system for any $\alpha$, for example if $\tilde{A_2}>0$,
$\tilde{A_1}>0$, and the cases when $H(\alpha)$ has a single real
root only. Furthermore, if $\alpha$ increases through a limit,
namely if $\frac{1}{\alpha}$ is small, "measure of the influence of
the past" is small then the system (\ref{eq:1r d}) has a locally
asymptotically stable equilibrium point $E^*_d$. This situation
corresponds to  our expectation and it is similar as it
was in the $2$-dimensional case, see in \cite{MC-FM1}. \\
Now we can formulate our main result.
We will give appropriate conditions that can easily be checked
in order to satisfy $\tilde{A_2}>0$, $\tilde{A_1}>0$.
\begin{thm}\label{main3}
If matrix $A_d$ given by (\ref{mxrd}) in case of $n=2$ satisfies
conditions (\ref{condst1r}),(\ref{condst2r}),(\ref{condst3r}) for
$i=1,2$ (has the same sign pattern as (\ref{pattern})) and the
following two conditions also hold
\begin{equation}\label{stdelay1}
a_{11}^2>a_{33}^2>-a_{13}a_{34},
\end{equation}
\begin{equation}\label{stdelay2}
a_{11}^2>a_{22}^2>-a_{12}a_{24}
\end{equation}
then $A_d$ is stable and $E^*_d$ is an asymptotically stable
equilibrium point of the delayed system (\ref{eq:1r d}) in case of
$n=2$ for any $\alpha>0$.
\end{thm}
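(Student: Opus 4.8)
The plan is to rely entirely on the machinery assembled before the statement. By Lemma~\ref{Hn:lem} the necessary conditions $a_i>0$, $i=0,1,2,3$, hold automatically, so by the Routh--Hurwitz test (Theorem~1.4.8 in \cite{FMpermo}) the matrix $A_d$ is stable as soon as $H(\alpha)=\tilde{A_3}\alpha^3+\tilde{A_2}\alpha^2+\tilde{A_1}\alpha+\tilde{A_0}>0$ for every $\alpha>0$. I would prove the stronger claim that all four coefficients are positive; since $\alpha>0$, $H(\alpha)$ is then a sum of positive quantities. First note that (\ref{stdelay1}) gives $a_{11}^2>a_{33}^2>0$ (here $a_{33}<0$ by the sign pattern (\ref{pattern})), so $a_{11}\ne0$, and with (\ref{condst1r}) this forces $a_{11}<0$; Lemma~\ref{Hncoef} then already yields $\tilde{A_3}>0$ and $\tilde{A_0}>0$. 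Everything thus reduces to showing $\tilde{A_1}>0$ and $\tilde{A_2}>0$, and this is precisely where (\ref{stdelay1}) and (\ref{stdelay2}) enter.

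For these two coefficients I would introduce the positive quantities $p:=-a_{11}$, $u:=-a_{22}$, $v:=-a_{33}$, $b:=-a_{12}a_{24}$, $c:=-a_{13}a_{34}$ (all positive by (\ref{pattern})), in which (\ref{stdelay1}) and (\ref{stdelay2}) read $p^2>v^2>c$ and $p^2>u^2>b$; in particular $b<u^2$, $b<p^2$, $c<v^2$, $c<p^2$. Substituting into the symbolic expressions for $\tilde{A_1}$ (homogeneous of weighted degree $5$) and $\tilde{A_2}$ (degree $4$) turns each summand into $\pm$ a monomial in $p,u,v,b,c$. One checks that the only negative monomials in $\tilde{A_2}$ are $-b^2$, $-c^2$, $-2bc$, $-bvp$, $-cup$, $-bv^2$, $-cu^2$, $-buv$, $-cuv$, while those in $\tilde{A_1}$ are $-bv^2p$, $-buvp$, $-cu^2p$, $-cuvp$, $-bv^3$, $-buv^2$, $-cu^3$, $-cu^2v$; in both cases there is an abundance of positive monomials.

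The heart of the proof is then an elementary matching: dominate each negative monomial by a part of a distinct positive monomial via one of the four inequalities above, spending no positive monomial twice. For $\tilde{A_2}$: $b^2<bu^2$, $c^2<cv^2$, one copy of $bc$ is $<cp^2$ and the other $<bp^2$ (the positive monomials $bu^2,cv^2,cp^2,bp^2$ all occur in $\tilde{A_2}$); $bvp<u^2vp$ and $cup<uv^2p$ are absorbed into the terms $4u^2vp$ and $4uv^2p$; $bv^2$ and $cu^2$ are each $<u^2v^2$ and are absorbed into $2u^2v^2$; and $buv<u^3v$, $cuv<uv^3$. A completely parallel assignment handles $\tilde{A_1}$, e.g.\ $bv^2p<v^2p^3$, $buvp<u^3vp$, $cu^2p<u^2v^2p$, $cuvp<uv^3p$, $bv^3<u^2v^3$, $buv^2<u^3v^2$, $cu^3<u^3p^2$, $cu^2v<u^2vp^2$, each right-hand side a part of a distinct positive term of $\tilde{A_1}$. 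After these cancellations a strictly positive remainder survives in each coefficient, so $\tilde{A_1}>0$ and $\tilde{A_2}>0$.

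Combining the four sign statements gives $H(\alpha)>0$ for every $\alpha>0$; together with Lemma~\ref{Hn:lem} this verifies the Routh--Hurwitz conditions, hence $A_d$ is stable, and by the principle of linearized stability $E^*_d$ is a locally asymptotically stable equilibrium of (\ref{eq:1r d}) with $n=2$, for all $\alpha>0$. I expect the only real difficulty to be the third paragraph: because the raw formulas for $\tilde{A_1}$ and $\tilde{A_2}$ are long, one must actually exhibit a consistent assignment giving each negative monomial its own positive ``donor'' and then check that no donor is over-drawn --- a finite, slightly tedious accounting rather than a genuine obstacle.
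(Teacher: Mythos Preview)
Your proposal is correct and follows the same strategy as the paper: reduce stability to $H(\alpha)>0$ for all $\alpha>0$, then show all four coefficients $\tilde{A_0},\tilde{A_1},\tilde{A_2},\tilde{A_3}$ are positive, with $\tilde{A_0},\tilde{A_3}>0$ coming from Lemma~\ref{Hncoef} once you observe (as you do) that the hypotheses force $a_{11}<0$. The only difference is bookkeeping for $\tilde{A_1}$ and $\tilde{A_2}$: the paper groups terms into factored blocks built from $(a_{22}^2+a_{12}a_{24})$, $(a_{33}^2+a_{13}a_{34})$, $(a_{11}^2-a_{22}^2)$, $(a_{11}^2-a_{33}^2)$ and verifies each block is positive, whereas you substitute $p=-a_{11}$, $u=-a_{22}$, $v=-a_{33}$, $b=-a_{12}a_{24}$, $c=-a_{13}a_{34}$ and pair each negative monomial with a distinct positive donor via one of the inequalities $b<u^2$, $c<v^2$, $b<p^2$, $c<p^2$. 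Both methods use exactly the same four inequalities in exactly the same places; your monomial matching is arguably easier to audit term by term, while the paper's grouping makes the role of the combinations $a_{ii}^2+a_{1i}a_{i4}$ more visible.
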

\begin{proof}
Under the conditions of the theorem
we can decompose the
expression of $\tilde{A_1}$
into the following positive terms:
\begin{eqnarray}
\tilde{A_1}&=&(a_{22}^2+a_{12}a_{24})(-a_{33}^3-a_{11}a_{33}^2-a_{11}a_{22}a_{33})+
(a_{33}^2+a_{13}a_{34})(-a_{22}^3-a_{11}a_{22}^2-a_{11}a_{22}a_{33})\nonumber\\
&+&(a_{11}^2-a_{33}^2)(a_{22}a_{12}a_{24})+(a_{11}^2-a_{22}^2)(a_{33}a_{13}a_{34})\nonumber\\
&+&(-a_{11}^3a_{22}^2-a_{11}^2a_{22}^3+a_{11}a_{22}^2a_{12}a_{24}-2a_{11}^3a_{22}a_{33}-4a_{11}^2a_{22}^2a_{33}
-a_{11}a_{22}^3a_{33}\nonumber\\
&&-a_{11}^3a_{33}^2-4a_{11}^2a_{22}a_{33}^2-2a_{11}a_{22}^2a_{33}^2-a_{11}^2a_{33}^3-a_{11}a_{22}a_{33}^3+a_{11}a_{33}^2a_{13}a_{34})\nonumber\\
&>&0\nonumber
\end{eqnarray}
and similarly for the expression of $\tilde{A_2}:$
\begin{eqnarray}
\tilde{A_2}&=&(a_{22}^2+a_{12}a_{24})(a_{11}a_{33}+a_{22}a_{33}+a_{33}^2-a_{12}a_{24})
+(a_{33}^2+a_{13}a_{34})(a_{11}a_{22}+a_{22}a_{33}+a_{22}^2-a_{13}a_{34})\nonumber\\
&+&(-a_{11}^2a_{12}a_{24}-a_{11}^2a_{13}a_{34}-2a_{12}a_{24}a_{13}a_{34})\nonumber\\
&+&(a_{11}^3a_{22}+2a_{11}^2a_{22}^2+a_{11}a_{22}^3-a_{11}a_{22}a_{12}a_{24}+a_{11}^3a_{33}+4a_{11}^2a_{22}a_{33}+\nonumber\\
&&3a_{11}a_{22}^2a_{33}+
2a_{11}^2a_{33}^2+3a_{11}a_{22}a_{33}^2+a_{11}a_{33}^3-a_{11}a_{33}a_{13}a_{34})\nonumber\\
&>&(a_{22}^2+a_{12}a_{24})(a_{11}a_{33}+a_{22}a_{33}+a_{33}^2-a_{12}a_{24})
+(a_{33}^2+a_{13}a_{34})(a_{11}a_{22}+a_{22}a_{33}+a_{22}^2-a_{13}a_{34})\nonumber\\
&+&(-a_{33}^2a_{12}a_{24}-a_{22}^2a_{13}a_{34}-2a_{12}a_{24}a_{13}a_{34})\nonumber\\
&+&(a_{11}^3a_{22}+2a_{11}^2a_{22}^2+a_{11}a_{22}^3-a_{11}a_{22}a_{12}a_{24}+a_{11}^3a_{33}+4a_{11}^2a_{22}a_{33}+\nonumber\\
&&3a_{11}a_{22}^2a_{33}+
2a_{11}^2a_{33}^2+3a_{11}a_{22}a_{33}^2+a_{11}a_{33}^3-a_{11}a_{33}a_{13}a_{34})\nonumber\\
&=&(a_{22}^2+a_{12}a_{24})(a_{11}a_{33}+a_{22}a_{33}+a_{33}^2-a_{12}a_{24})
+(a_{33}^2+a_{13}a_{34})(a_{11}a_{22}+a_{22}a_{33}+a_{22}^2-a_{13}a_{34})\nonumber\\
&+&(-a_{12}a_{24}(a_{33}^2+a_{13}a_{34})
-a_{13}a_{34}(a_{22}^2+a_{12}a_{24}))\nonumber\\
&+&(a_{11}^3a_{22}+2a_{11}^2a_{22}^2+a_{11}a_{22}^3-a_{11}a_{22}a_{12}a_{24}+a_{11}^3a_{33}+4a_{11}^2a_{22}a_{33}+\nonumber\\
&&3a_{11}a_{22}^2a_{33}+
2a_{11}^2a_{33}^2+3a_{11}a_{22}a_{33}^2+a_{11}a_{33}^3-a_{11}a_{33}a_{13}a_{34})\nonumber\\
&>&0\nonumber
\end{eqnarray}
\end{proof}

This theorem means that in case of a sign-stable interaction matrix
(\ref{mxr}) there are many cases when delay does not destabilize the
system.
By Theorem 2.1, if $a_{11}\leq 0$ (given by (\ref{a11})) and if conditions
(\ref{condst2r}), (\ref{condst3r}) are also satisfied then
(\ref{mxr}) is sign-stable. This is the two-dimensional situation
modeled by Farkas and Cavani in \cite{MC-FM1} when the equilibrium
point lies on the descending branch of the prey nullcline. That is
the case when $E^*$ lies outside the All\'ee-effect zone -- here the
effect of overcrowding is already felt. Any further increase in prey
quantity must be counterbalanced by a decrease in predator quantity,
see in \cite{FM BIOL}.
On the other hand, in the All\'ee-effect zone prey is
scarce and an increase in prey quantity is beneficial for the growth
rate of prey, see in \cite{FM BIOL}. Let us introduce the vector
\begin{equation}\label{F}
F(x, y_1, y_2, \dots, y_n)=\left[\begin{array}{c}
rxg(x,K)-\displaystyle\sum_{i=1}^n y_ip_i\left(\frac{y_i}{x}\right)\\
y_1p_1\displaystyle\left(\frac{y_1}{x}\right)-d_1y_1\\
\vdots\\
y_np_n\displaystyle\left(\frac{y_n}{x}\right)-d_ny_n
\end{array}
\right].
\end{equation}
Vector (\ref{F}) has two rows $F_1$ and $F_2$ in the two-dimensional
case.  Suppose that any predator quantity growth will decrease the
growth rate of prey, namely $F'_{1 _{y_1}}<0$. Some typical
reasonable forms of $F_1(x,y_1)=0$ zero isoclines applicable to most
species in case of ratio-dependence are shown in Figure
\ref{2dimallee}. We can see that $F'_{1_x}>0$, thus $a_{11}>0$ in
the All\'ee-effect zone modelled by the increasing branch of the
function in the third graph.

\begin{figure}[!ht]
\begin{center}
\includegraphics[width=16cm]{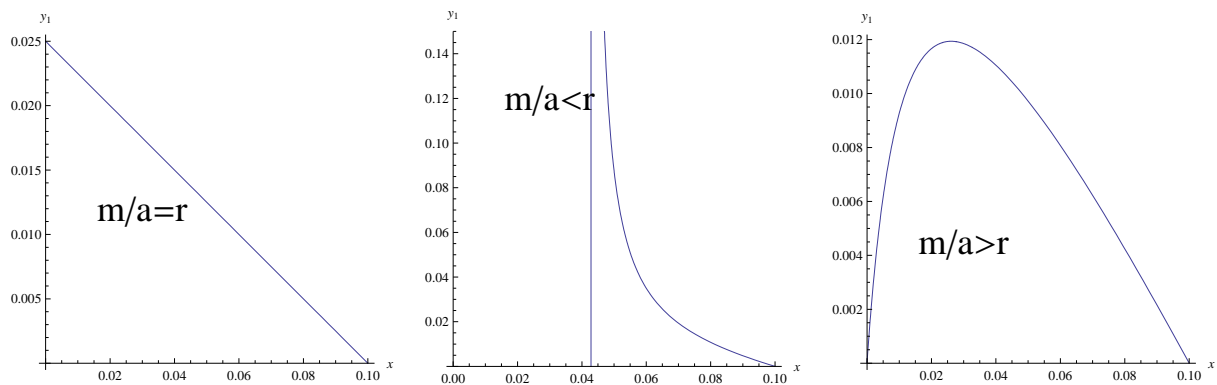}
\caption{\label{2dimallee} Typical nullclines of prey in case of
2--dimension}
\end{center}
\end{figure}

In case of our model we keep this meaning of the All\'ee-effect
zone, and we say we are outside of the All\'ee-effect zone if in
order to keep the prey growth rate zero the increase of prey can be
counterbalanced by the decrease of the whole quantities of the
different predators. Let us consider the higher dimensional cases.
Now the function $F$ given by (\ref{F}) has $n+1$ rows $F_i$,
$i=1,2,\dots, n+1$. Suppose that any predator quantity growth will
decrease the growth rate of prey, namely $F'_{1_{y_i}}<0$,
$i=1,2,\dots,n$. In the three dimensional case a typical onion-like
prey zero isocline surface of $F_1(x,y_1,y_2)=0$ is shown in Figure
2.4.2 in \cite{FM BIOL} page 44 without ratio-dependence. Inside the
onion-like surface $F_1>0$ while outside $F_1<0$. Function $F$ is
increasing as we cross the surface inwards and therefore its
gradient points inward. Therefore if the equilibrium point is on the
eastern hemisphere of this onion then $F'_{1_x}<0$, thus, $a_{11}<0$
and on the western hemisphere of the onion $F'_{1_x}>0$, thus,
$a_{11}>0$ and we can see that $F'_{1_x}>0$, thus $a_{11}>0$ in the
All\'ee-effect zone. The onion is similar to this in case of
ratio-dependence shown in Figures \ref{r3feliratnelkuli},
\ref{r7feliratnelkuli}, \ref{r10feliratnelkuli}.

\begin{figure}[!h]
\begin{center}
\includegraphics{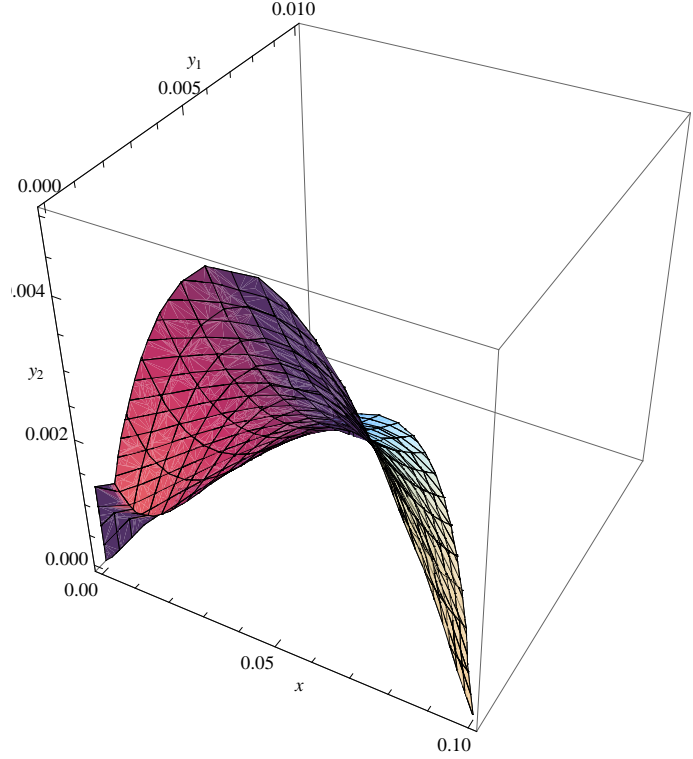}
\caption{\label{r3feliratnelkuli} Typical zero-cline of prey in case of $r=3$ in
3--dimensions ($r=3, \; K=0.1, \; m_1=16, \; a_1=4, \; m_2=18, \;
a_2=2$)}
\end{center}
\end{figure}
\begin{figure}[!h]
\begin{center}
\includegraphics{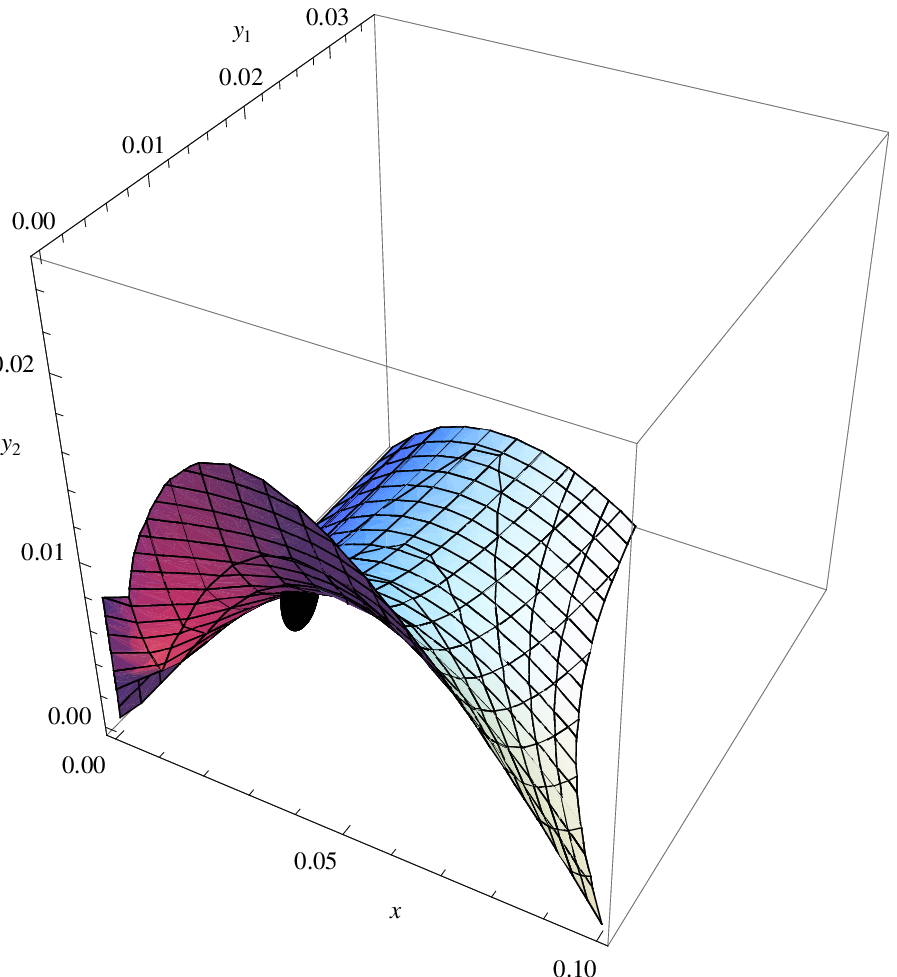}
\caption{\label{r7feliratnelkuli} Typical zero-cline of prey in case of $r=7$ in
3--dimensions ($r=7, \; K=0.1, \; m_1=16, \; a_1=4, \; m_2=18, \;
a_2=2$)}
\end{center}
\end{figure}
\begin{figure}[!h]
\begin{center}
\includegraphics{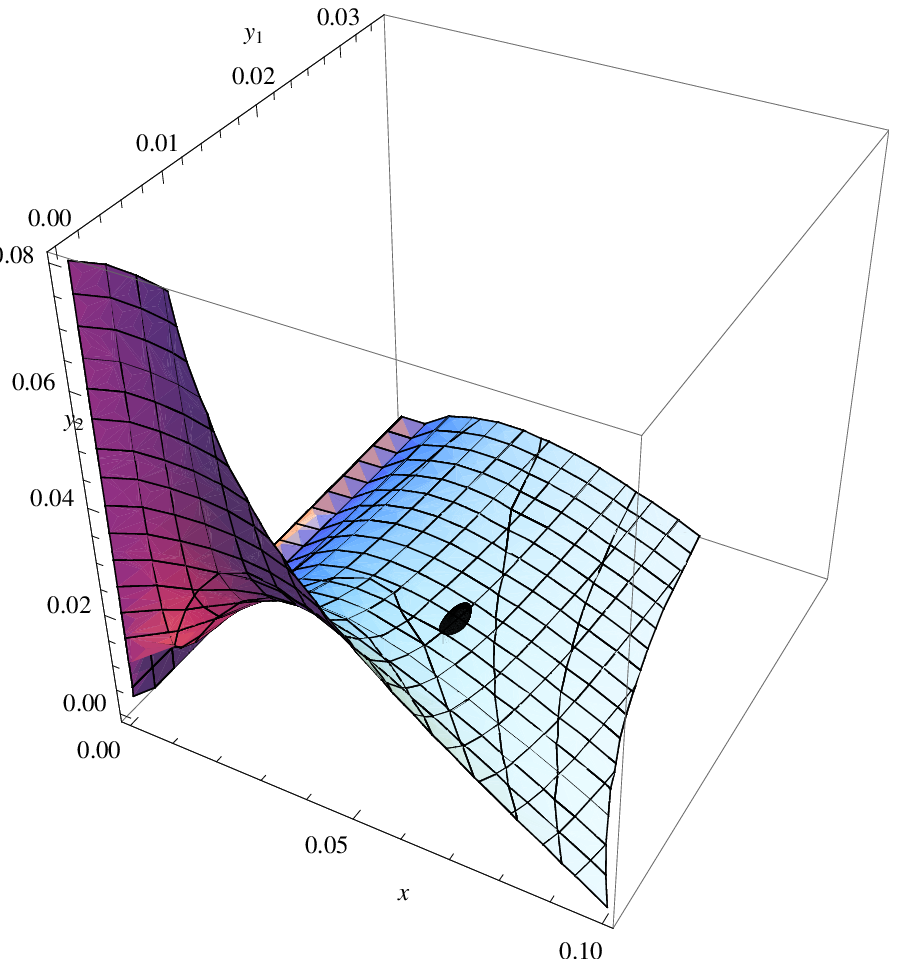}
\caption{\label{r10feliratnelkuli} Typical zero-cline of prey in case of $r=10$ in
3--dimensions ($r=10, \; K=0.1, \; m_1=16, \; a_1=4, \; m_2=18, \;
a_2=2$)}
\end{center}
\end{figure}

If $F'_{1_{y_i}}<0$ (namely $y_i$ is predator of $x$) then
$a_{11}>0$ holds also in higher dimension in the All\'ee-effect
zone. To see this, let us consider $F_1(x, y_1, \dots,
y_n)=rxg(x,K)-\displaystyle\sum_{i=1}^n
y_ip_i\left(\frac{y_i}{x}\right)$ and surface $F_1(x, y_1, \dots,
y_n)=0$, which is the prey zero isocline surface. Let  be $E^1=(x^1,
y_1^1, \dots, y_n^1), \;\; E^2=(x^2, y_1^2, \dots, y_n^2)$ two
different points in the All\'ee-effect zone on the prey isocline
surface, where $x^1<x^2, \;\; y_i^1<y_i^2, \;\;, i=1,\dots, n$.

\begin{eqnarray}
0&=&F_1(x^2, y_1^2, \dots, y_n^2)-F_1(x^1, y_1^1, \dots, y_n^1) \nonumber\\
&=&\{F_1(x^2, y_1^2, \dots, y_n^2)-F_1(x^2, y_1^1, y_2^2\dots,
y_n^2)\} \nonumber\\
&+&\{F_1(x^2, y_1^1,y_2^2 \dots, y_n^2)-F_1(x^2, y_1^1,y_2^1, y_3^2
,y_4^2, \dots, y_n^2)\} \nonumber\\
&+&\{F_1(x^2, y_1^1,y_2^1, y_3^2,y_4^2, \dots, y_n^2) - F_1(x^2,
y_1^1,y_2^1, y_3^1,y_4^2, \dots, y_n^2)\}+\dots \nonumber\\
&+&\{F_1(x^2, y_1^1,y_2^1, y_3^1, \dots,y_{n-1}^1, y_n^2) - F_1(x^2,
y_1^1,y_2^1, y_3^1, \dots,y_{n-1}^1, y_n^1)\} \nonumber\\
&+&\{F_1(x^2, y_1^1,y_2^1, y_3^1, \dots, y_n^1)-F_1(x^1, y_1^1,
\dots, y_n^1)\}\nonumber.
\end{eqnarray}

Expressions in the brackets are negative except the last
bracket because of $F'_{y_i}<0$, thus $F_x>0$ must hold. \\
It is reasonable to say that $E^*$ lies outside the All\'ee-effect
zone if $a_{11}<0$ and $E^*$  lies in the All\'ee-effect zone if
$a_{11}>0$.

\begin{rem}
 Theorem \ref{main3} means, if $E^*$ lies outside the All\'ee-effect
zone then delay does not change the stability behaviour of the system
in this special case.
\end{rem}

This remark is a direct generalization of Case 1 of
\cite{MC-FM1} on page 226. \\
The meaning of conditions (\ref{stdelay1}), (\ref{stdelay2}) are the
following:\\
Conditions $a_{11}^2>a_{33}^2$, $a_{11}^2>a_{22}^2$ mean
 that
intraspecific competition in prey species is greater than
intraspecific competition in predators species.\\
The meaning of conditions $a_{33}^2>-a_{13}a_{34}$,
$a_{22}^2>-a_{12}a_{24}$ is in connection with the phenomenon of
their consume strategy, namely do they try to ensure their survival
by having a relatively high or low growth rate and are able or not
to raise their offspring on a scarce supply of food. We will discuss
this very interesting meaning of conditions (\ref{stdelay1}),
(\ref{stdelay2}) in case of (\ref{5g}) and (\ref{13p}) or
(\ref{14p}) in the following section.

\subsection{Strategies}\label{meaning}

The condition $a_{11}\leq 0$ can be ensured by a relative high
intrinsic growth rate $r$ of prey. This means that there is enough
food for predators in order to reproduce well. If this fact is valid
in a long term then we expect even more that a predator species has
an advantage that need more food and has a high growth rate. The
parameter $a_i>0$ is the half saturation constant of predator $i$.
This means that when the quantity of prey reaches value $a_i$ then
the per capita birth rate of predator $i$ reaches half of the
maximal birth rate, as one can see in case of a simple Holling model
where $p_i(x,a_i)=m_i\frac{x}{a_i+x}$, $m_i$ is "the maximal birth
rate" of the $i$-th predator, and $p_i(a_i,a_i)=\frac{m_i}{2}$. In
case of ratio-dependent models parameter $a_i$ has a similar
meaning, namely the greater $a_i$ is the more food is needed for
predator $i$. To see this let us consider the ratio-dependent
Holling function, given by (\ref{13p}). In this case at a fixed
value of $y_i$, $p_i(x,y_i,a_i)=\frac{m_i}{2}$ if $x=a_iy_i$.
Similarly in case of the ratio-dependent Ivlev function, given by
(\ref{14p}) at a fixed value of $y_i$,
$p_i(x,y_i,a_i)=\frac{m_i}{2}$ if $x=a_iy_i\ln 2$. Thus, a predator
with a big half saturation constant can be considered as an
r-strategist and with a lower one as a K-strategist. See in
\cite{KK-KS}, \cite{FM BIOL}. Thus, we expect that the parameters
$a_i$ cannot be arbitrary small, because the mentioned effect is
stronger in that case when the time average of prey quantity over
the past has the same influence on the present growth rates of
different predators. The following theorems express this situation.

\begin{thm}\label{holling a}
Let matrix $A_d$ be given by (\ref{mxrd}) in case of $n=2$
satisfying conditions
(\ref{condst1r}),(\ref{condst2r}),(\ref{condst3r}) for $i=1,2$ (i.e.
$A_d$ has the same sign pattern as (\ref{pattern})) and the function
$g$, $p_i$ are given by (\ref{5g}), (\ref{13p}), respectively. If
$a_i>1$ for $i=1,2$ then conditions (\ref{stdelay1}),
(\ref{stdelay2}) are satisfied.
\end{thm}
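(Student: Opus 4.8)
The plan is to substitute the explicit forms (\ref{5g}) of $g$ and (\ref{13p}) of $p_i$, use the equilibrium equations (\ref{xr}) to put every entry of $A_d$ occurring in (\ref{stdelay1})--(\ref{stdelay2}) into closed form in $r,m_i,d_i,a_i$, and then verify the four scalar inequalities one at a time. First I would read off the consequences of (\ref{xr}): from $p_i(y_i^*/x^*)=d_i$ with $p_i$ as in (\ref{13p}) one gets $y_i^*/x^*=(m_i-d_i)/(a_i d_i)$, and differentiating (\ref{13p}) and evaluating there, ${p'_i}^*=-a_id_i^2/m_i<0$ (so, incidentally, (\ref{condst2r}) is automatic for this model and (\ref{condst3r}) reduces to $a_{1,i+1}<0$, also automatic, so that $a_{11}\le 0$ is the only genuine standing hypothesis). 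Substituting ${p'_i}^*$ into $a_{22}=\egyik{1}$, $a_{24}=\masik{1}$, $a_{12}=-d_1-\egyik{1}$ and their index-$2$ counterparts $a_{33},a_{34},a_{13}$ yields
\[
a_{22}=-\frac{d_1(m_1-d_1)}{m_1},\qquad a_{24}=\frac{(m_1-d_1)^2}{a_1 m_1},\qquad a_{12}=-\frac{d_1^2}{m_1},
\]
while using the logistic $g$ of (\ref{5g}) together with $rx^*g(x^*,K)=d_1y_1^*+d_2y_2^*$ to eliminate $x^*/K$ gives $a_{11}=(m_1^2-d_1^2)/(a_1m_1)+(m_2^2-d_2^2)/(a_2m_2)-r$, so that the hypothesis $a_{11}\le 0$ is an explicit lower bound on $r$.

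The two inner inequalities of (\ref{stdelay1})--(\ref{stdelay2}) then come out immediately. Since $a_{12},a_{13}<0$ and $a_{24},a_{34}>0$ we have $-a_{12}a_{24}>0$ and $-a_{13}a_{34}>0$, and computing the ratios with the closed forms above gives $a_{22}^2/(-a_{12}a_{24})=a_1$ and $a_{33}^2/(-a_{13}a_{34})=a_2$. Hence $a_{22}^2>-a_{12}a_{24}$ and $a_{33}^2>-a_{13}a_{34}$ hold precisely because $a_1>1$ and $a_2>1$; indeed each of these two inequalities is \emph{equivalent} to the corresponding $a_i>1$, which is exactly what makes the hypothesis of the theorem the right one.

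It remains to establish the outer inequalities $a_{11}^2>a_{22}^2$ and $a_{11}^2>a_{33}^2$. Since $a_{11}\le 0$ and $a_{22},a_{33}<0$, these are the same as $-a_{11}>|a_{22}|=d_1(m_1-d_1)/m_1$ and $-a_{11}>|a_{33}|=d_2(m_2-d_2)/m_2$, i.e., after inserting the closed form of $a_{11}$, two further lower bounds on $r$. The argument must combine the inequality $a_{11}\le 0$ -- which already gives $r\ge(m_1^2-d_1^2)/(a_1m_1)+(m_2^2-d_2^2)/(a_2m_2)$ -- with $a_1,a_2>1$, using that the latter shrink the subtracted terms $(m_i^2-d_i^2)/(a_im_i)$, so as to carry $-a_{11}$ past $|a_{22}|$ and $|a_{33}|$. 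This last step is the one I expect to be the main obstacle: it is the only place where the sign condition $a_{11}\le 0$ and the half-saturation condition $a_i>1$ must be used jointly rather than separately, and keeping track of which combination of $r,m_i,d_i,a_i$ dominates in the resulting polynomial inequalities is where the genuine work lies; by contrast, once the entries are in closed form the two inner inequalities are a one-line computation, as above.
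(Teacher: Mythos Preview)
Your computation of the entries of $A_d$ in closed form and the reduction of the inner inequalities $a_{22}^2>-a_{12}a_{24}$, $a_{33}^2>-a_{13}a_{34}$ to $a_i>1$ is correct, and this is exactly what the paper does: its entire proof is the one-line instruction ``calculate $a_{33}^2>-a_{13}a_{34}$, $a_{22}^2>-a_{12}a_{24}$ by substituting (\ref{5g}), (\ref{13p}) and the statement follows.'' So on the part that the paper actually proves, you and the paper agree.

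Where you go beyond the paper is in trying to also derive the outer inequalities $a_{11}^2>a_{22}^2$ and $a_{11}^2>a_{33}^2$, and your instinct that this is ``the main obstacle'' is well founded---in fact these inequalities \emph{cannot} be deduced from the stated hypotheses. The assumption (\ref{condst1r}) allows $a_{11}=0$, in which case $a_{11}^2=0<a_{22}^2$ regardless of $a_1,a_2$; more generally, your own closed form gives $-a_{11}=r-\sum_i(m_i^2-d_i^2)/(a_im_i)$, and $a_{11}\le0$ together with $a_i>1$ only forces this quantity to be $\ge0$, not $>d_i(m_i-d_i)/m_i$. So no amount of ``keeping track of which combination of $r,m_i,d_i,a_i$ dominates'' will close that gap. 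The paper's surrounding discussion treats the two halves of (\ref{stdelay1})--(\ref{stdelay2}) as separate conditions with separate biological meanings (the outer ones express that prey intraspecific competition dominates predator intraspecific competition, the inner ones are the $r$-strategist/$K$-strategist dichotomy), and Theorems~\ref{holling a}--\ref{ivlev a} are meant to address only the latter; the theorem statement is simply imprecise on this point.
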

\begin{proof}
Calculate $a_{33}^2>-a_{13}a_{34}$, $a_{22}^2>-a_{12}a_{24}$ by
substituting (\ref{5g}), (\ref{13p}) and the statement follows.
\end{proof}
\begin{thm}\label{ivlev a}
Let matrix $A_d$ be given by (\ref{mxrd}) in case of $n=2$
satisfying conditions
(\ref{condst1r}),(\ref{condst2r}),(\ref{condst3r}) for $i=1,2$ (i.e.
$A_d$ has the same sign pattern as (\ref{pattern})) and the function
$g$, $p_i$ are given by (\ref{5g}), (\ref{14p}), respectively. If
$a_i>\frac{1}{2}$ for $i=1,2$ then conditions (\ref{stdelay1}),
(\ref{stdelay2}) are satisfied.
\end{thm}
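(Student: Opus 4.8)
The plan is to run the same argument as in the proof of Theorem \ref{holling a}: the sign-pattern part of the hypothesis is already assumed, so it remains only to verify \eqref{stdelay1} and \eqref{stdelay2}, and the only feature that differs from the Holling case is the explicit value of the derivative ${p_i'}^*$ at the equilibrium. Put $u_i:=y_i^*/x^*$. With $g$ the logistic rate \eqref{5g} and $p_i$ the Ivlev response \eqref{14p}, differentiating $p_i(z)=m_i\big(1-e^{-1/(a_iz)}\big)$ and then inserting the equilibrium identity $p_i(u_i)=d_i$ from \eqref{xr}, which reads $e^{-1/(a_iu_i)}=(m_i-d_i)/m_i$, gives the clean expressions
\[
{p_i'}^*=-\frac{m_i-d_i}{a_i u_i^{2}},\qquad \frac{1}{a_i u_i}=\ln\frac{m_i}{m_i-d_i}=:t_i>0 .
\]
The six entries occurring in \eqref{stdelay1} and \eqref{stdelay2} are then $a_{22}=u_1{p_1'}^*$, $a_{33}=u_2{p_2'}^*$, $a_{12}=-d_1-u_1{p_1'}^*$, $a_{13}=-d_2-u_2{p_2'}^*$, $a_{24}=-u_1^{2}{p_1'}^*$, $a_{34}=-u_2^{2}{p_2'}^*$, while $a_{11}=r(1-2x^*/K)+u_1^{2}{p_1'}^*+u_2^{2}{p_2'}^*$ by \eqref{a11}.

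Next I would substitute and reduce. By \eqref{condst3r} we have $d_i+u_i{p_i'}^*>0$, i.e. $|a_{i+1,i+1}|<d_i$; hence $a_{11}^{2}>a_{22}^{2}$ and $a_{11}^{2}>a_{33}^{2}$ hold as soon as the prey self-regulation term $r\,x^*|g'_x|$ in $a_{11}$ dominates, which is exactly the ``outside the All\'ee-effect zone'' situation built into $a_{11}\le 0$ (the intraspecific-competition part, handled as in Theorem \ref{holling a}). For the two remaining inequalities $a_{22}^{2}>-a_{12}a_{24}$ and $a_{33}^{2}>-a_{13}a_{34}$, dividing by the positive factor $u_i^{2}$ and using the sign of ${p_i'}^*$ (negative by \eqref{condst2r}) together with $a_{i+1,4}=-u_i^{2}{p_i'}^*$ and $a_{1,i+1}=-(d_i+u_i{p_i'}^*)$, each reduces to the single scalar statement $(1+u_i)\,|{p_i'}^*|>d_i$. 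Substituting $|{p_i'}^*|=(m_i-d_i)/(a_iu_i^{2})$, clearing denominators, and using $a_iu_it_i=1$ together with $m_i-d_i=m_ie^{-t_i}$, this collapses to the one-variable inequality
\[
a_i\,t_i^{2}+t_i+1>e^{t_i}.
\]
For the Holling response \eqref{13p} the identical step instead produces the affine condition $a_i>1$, which is why Theorem \ref{holling a} was immediate.

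The substantive point is therefore to deduce $a_i t_i^{2}+t_i+1>e^{t_i}$ from $a_i>\tfrac12$. I would finish with the power-series identity $e^{t}-1-t=\sum_{k\ge2}t^{k}/k!$, so that the inequality holds once $a_i>\tfrac12+\tfrac{t_i}{6}+\tfrac{t_i^{2}}{24}+\cdots$, and then control $t_i=\ln\big(m_i/(m_i-d_i)\big)$ using \eqref{const:9} and the admissibility of $E^*$ (the first relation in \eqref{xr} forces $d_1u_1+d_2u_2<r$, which bounds the $u_i$ and hence the $t_i$, and the sign pattern \eqref{pattern} constrains them further). I expect this last implication to be the main obstacle: unlike the Holling case the reduced condition is genuinely exponential and tightens as $d_i/m_i\to1$, so the argument must make explicit how large $d_i/m_i$ is allowed to be in order that the slack in $a_i>\tfrac12$ still beats the higher-order terms $\tfrac{t_i}{6}+\cdots$; equivalently, one must exhibit the precise estimate on $e^{t_i}$ that turns the reduced inequality into $a_i>\tfrac12$ over the admissible parameter range. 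Everything else is routine substitution of \eqref{5g} and \eqref{14p} and straightforward bookkeeping.
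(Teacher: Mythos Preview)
Your reduction of $a_{22}^{2}>-a_{12}a_{24}$ (and its twin) to the scalar statement $(1+u_i)\,|{p_i'}^*|>d_i$ and then, after inserting the Ivlev data, to
\[
a_i>\frac{e^{t_i}-1-t_i}{t_i^{2}},\qquad t_i=\ln\frac{m_i}{m_i-d_i},
\]
is carried out correctly; up to here your argument runs parallel to the paper's proof of Theorem~\ref{holling a}. The genuine gap is in your last paragraph. The function $t\mapsto (e^{t}-1-t)/t^{2}$ is \emph{increasing} on $(0,\infty)$ and unbounded, so the right-hand side above is not dominated by $\tfrac12$; that value is attained only in the limit $t_i\to0^{+}$. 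Your proposed fix---bounding $t_i$ through \eqref{const:9}, \eqref{xr} or the sign pattern---cannot succeed: $t_i$ depends on $m_i,d_i$ alone, \eqref{const:9} merely says $t_i>0$, and in the Ivlev case \eqref{condst3r} reduces to the tautology $e^{t_i}-1>t_i$, so none of the standing hypotheses keep $t_i$ away from infinity. (Regarding $a_{11}^{2}>a_{jj}^{2}$: your argument there is hand-wavy, but the paper's proof, like that of Theorem~\ref{holling a}, in fact addresses only the right-hand halves of \eqref{stdelay1}--\eqref{stdelay2}, so on that point you match the paper.)

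The paper's route differs at the final step: instead of a power-series estimate it writes the condition directly as $a_i>$ (function of $m_i,d_i$), namely \eqref{max a}, substitutes $x=m_i/(m_i-d_i)>1$, and shows by differentiation that the resulting one-variable function $a_i(x)=\bigl(1-\tfrac1x-\tfrac{\ln x}{x}\bigr)/(\ln x)^{2}$ is monotone decreasing with $\lim_{x\to1^{+}}a_i(x)=\tfrac12$, so its supremum equals $\tfrac12$ and the claim follows. Observe, however, that your (correctly derived) bound reads $(x-1-\ln x)/(\ln x)^{2}$, which differs from the paper's expression by the factor $1/x=e^{-t_i}$. You should trace this discrepancy carefully: either an algebraic slip has entered your substitution, or the denominator in \eqref{max a} should carry $m_i-d_i$ rather than $m_i$, in which case the claimed uniform bound $\tfrac12$---and with it the theorem as stated---would fail. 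A single numerical check (for instance $m_i=2$, $d_i=1.5$, $a_i=0.6$) decides which.
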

\begin{proof}
Calculate $a_{33}^2>-a_{13}a_{34}$, $a_{22}^2>-a_{12}a_{24}$ by
substituting (\ref{5g}), (\ref{14p}) we get:
\begin{equation}\label{max a}
a_i>\frac{\frac{d_i}{m_i}-\frac{m_i-d_i}{m_i}\ln\frac{m_i}{m_i-d_i}}{(\ln\frac{m_i}{m_i-d_i})^2}.
\end{equation}
Let us denote $x=\frac{m_i}{m_i-d_i}, \;\;x>1.$ Thus,
$$a_i(x)=\frac{1-\frac{1}{x}-\frac{1}{x}\ln x}{(\ln x)^2}, $$
where $\lim_{x\rightarrow 1+0} a_i(x)=\frac{1}{2}$ and $a_i(x)$ is
monotone decreasing for $x>1$ because its derivative is:
$a_i(x)'=\frac{\frac{1}{x^2}((\ln x)^2-2x+2+2\ln x)}{(\ln x)^3}$ and
the numerator is negative because it is zero if $x=1$ and the
derivative of $((\ln x)^2-2x+2+2\ln x)$ is negative for $x<1$. Thus,
the maximum of the righthand side of (\ref{max a}) is equal to
$\frac{1}{2}$ and theorem holds.
\end{proof}
The meaning of Theorems \ref{holling a}, \ref{ivlev a} corresponds
to our expectation, namely in case of delayed models the advantage
of the r-strategist can be seen over the K-strategist. This
advantage is greater in case of a ratio-dependent Holling model than
in case of a ratio-dependent Ivlev model.

\subsection{One prey, \(n\) predators with delay}

Now let the number of predators \(n\) be an arbitrary positive integer
and let us consider system \eqref{eq:1r}
with its coefficient matrix given by \eqref{mxr}.
Let us denote the entries of \eqref{mxr} by \(a_{ij},\) thus
\begin{eqnarray}
A=
\left[
\begin{array}{llllll}
a_{11}   &a_{12}&\dots &\dots  &\dots  &a_{1n}\\
a_{21}   &a_{22}&0     &\dots  &\dots  &0     \\
a_{31}   &     0&a_{33}&\dots  &\dots  &0     \\
\vdots   &\vdots&\vdots&\vdots  &\vdots &0\\
a_{n-1,1}&0     &0     &\dots  &a_{n-1,n-1}&0\\
a_{n1}   &0     &0     &\dots  &0  &a_{nn}
\end{array}
\right].
\end{eqnarray}
If we modify system \eqref{eq:1r} with delay we get system
\eqref{eq:1r d} which, after linearization has the coefficient
matrix given by \eqref{mxrd}. We have seen that \eqref{mxrd} can be
obtained from the entries of \(A\) as follows:
\begin{eqnarray}\label{mxrdn}
A_d=
\left[
\begin{array}{llllll}
a_{11}&a_{12}&a_{13} &\dots  &a_{1n}&0\\
0     &a_{22}&0      &\dots  &0     &a_{21}\\
0     &  0   &a_{33}&\dots  &0&a_{31}\\
\vdots&\vdots&\vdots&\vdots&\vdots&\vdots\\
0     &0     &0     &\dots  &a_{nn}&a_{n1}\\
\alpha&0     &0     &\dots  &0     &-\alpha
\end{array}
\right].
\end{eqnarray}
\begin{thm}
Let matrix \(A_d\) be given by \eqref{mxrd} for arbitrary positive
integer \(n,\) and suppose it satisfies conditions \eqref{condst2r}
and \eqref{condst3r} for all \(i=1,2,\dots,n;\) and let
\(a_{11}<0.\) If \(\alpha\) is small enough or large enough then
\(A_d\) is stable, and \(E_d^*\) is an asymptotically stable
equilibrium state of the delayed system \eqref{eq:1r d}.
\end{thm}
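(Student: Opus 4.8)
The plan is to pin down the characteristic polynomial of $A_d$ in terms of that of the undelayed arrow matrix $A$, and then to treat the two regimes $\alpha\to 0^{+}$ and $\alpha\to\infty$ by standard root--perturbation arguments.

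First I would record the block structure of $A_d$. Writing $e_1$ for the first standard basis vector of $\R^{n}$, one has
$$A_d=\begin{bmatrix} U & c \\ \alpha e_1^{\top} & -\alpha\end{bmatrix},\qquad c=(0,a_{21},a_{31},\dots,a_{n1})^{\top},$$
where $U$ is the \emph{upper-triangular} $n\times n$ matrix obtained from $A$ by zeroing out the first column below the diagonal, i.e.\ the delay substitution has pulled the first column of the arrow matrix $A$ out into a new coordinate. Thus $A=U+c\,e_1^{\top}$, and the matrix determinant lemma gives $P_A(\lambda):=\det(\lambda I_n-A)=\det(\lambda I_n-U)\bigl(1-e_1^{\top}(\lambda I_n-U)^{-1}c\bigr)$. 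Expanding $\det(\lambda I_{n+1}-A_d)$ by a Schur complement along the last row and column and inserting this identity, together with $\det(\lambda I_n-U)=\prod_{i=1}^{n}(\lambda-a_{ii})$, the cross terms cancel and one is left with
$$\chi(\lambda):=\det(\lambda I_{n+1}-A_d)=\lambda\prod_{i=1}^{n}(\lambda-a_{ii})+\alpha\,P_A(\lambda).$$
Two facts are then available: by Theorem~\ref{sign th}, the hypotheses $a_{11}<0$ together with \eqref{condst2r} and \eqref{condst3r} make $A$ sign-stable, so $P_A$ is a Hurwitz polynomial and in particular $P_A(0)>0$; and each $a_{ii}<0$ ($i\ge 2$ by \eqref{condst2r}, $i=1$ by assumption), so $\prod_{i=1}^{n}(\lambda-a_{ii})$ is Hurwitz and $\prod_{i=1}^{n}(-a_{ii})>0$.

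For small $\alpha$: at $\alpha=0$ the roots of $\chi$ are $0$ and the $n$ negative numbers $a_{ii}$. The latter stay in the open left half-plane for small $\alpha>0$ by continuous dependence of the roots on $\alpha$, and the simple root at the origin moves according to
$$\lambda'(0)=-\frac{\partial_\alpha\chi(0,0)}{\partial_\lambda\chi(0,0)}=-\frac{P_A(0)}{\prod_{i=1}^{n}(-a_{ii})}<0,$$
so it too enters the open left half-plane; hence $A_d$ is stable for all sufficiently small $\alpha>0$. For large $\alpha$: $\alpha^{-1}\chi(\lambda)\to P_A(\lambda)$ coefficientwise, and since $\deg P_A=n<n+1=\deg\chi$, exactly $n$ roots of $\chi$ converge to the roots of $P_A$ (all of real part $\le -\delta$ for some $\delta>0$) while one root escapes to infinity; reading off the coefficient of $\lambda^{n}$ shows that the sum of the roots of $\chi$ equals $\sum_i a_{ii}-\alpha$, so the escaping root is $-\alpha+O(1)$ and its real part tends to $-\infty$. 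Therefore for $\alpha$ large enough all $n+1$ roots of $\chi$ lie in the open left half-plane, $A_d$ is stable, and the asymptotic stability of $E_d^{*}$ follows from the principle of linearized stability.

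The one genuinely delicate step is the first: recognising that the delay substitution leaves behind the triangular block $U$ plus a rank-one correction $c\,e_1^{\top}$, and then compressing $\det(\lambda I_{n+1}-A_d)$ into the form $\chi(\lambda)=\lambda\prod_i(\lambda-a_{ii})+\alpha P_A(\lambda)$. After that, the two limiting regimes need nothing beyond continuity of polynomial roots and a one-line implicit-function computation, and in particular no explicit manipulation of the entries of $A_d$.
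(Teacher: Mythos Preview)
Your argument is correct. Both you and the paper reduce the problem to the same identity
\[
\chi(\lambda)=\lambda\prod_{i=1}^{n}(\lambda-a_{ii})+\alpha\,P_A(\lambda),
\]
the paper reaching it by column operations followed by a Schur complement with respect to the block $A-\lambda E$, you by the Schur complement with respect to the $(n+1,n+1)$ entry together with the matrix determinant lemma for the rank-one correction $A=U+ce_1^{\top}$. These are essentially two presentations of the same computation.

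Where the two proofs genuinely diverge is in the perturbation step. The paper first checks that \emph{all} coefficients of $\chi$ have the same sign, so $\chi$ can have no nonnegative real root for any $\alpha>0$; it then argues, in each limiting regime, that $n$ of the $n+1$ roots are pinned near known left-half-plane points and the remaining root, having no partner for a complex conjugate pair, must be real and hence negative. You instead handle the two regimes quantitatively: for $\alpha\to0^{+}$ the implicit-function derivative $\lambda'(0)=-P_A(0)/\prod_i(-a_{ii})<0$ pushes the zero root leftward, and for $\alpha\to\infty$ Vieta's formula on the $\lambda^{n}$ coefficient forces the escaping root to be $-\alpha+O(1)$. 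Your route is shorter and avoids the coefficient-sign bookkeeping; the paper's route, on the other hand, yields the extra global information that $\chi$ has no nonnegative real root for \emph{every} $\alpha>0$, not only for extreme values. Either way, the implicit use of the conjugate-pair parity argument (one unaccounted-for root must be real) is present in both proofs at the large-$\alpha$ end.
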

\begin{proof}
Let us consider the characteristic polynomial
\(\mathcal{D}(\lambda):=\det(A_d-\lambda E)\) of \eqref{mxrdn}. Let
us denote column $i$ of matrix \(A_d\) by \(\mathbf{c}_i,\) $(i=1,
2, \dots, n)$ and let us make the following column operations: first
\(\mathbf{c}_1\Longrightarrow\mathbf{c}_1+\mathbf{c}_{n+1},\) then
\(\mathbf{c}_{n+1}\Longrightarrow\mathbf{c}_{n+1}-\mathbf{c}_1.\)
Now we get
\begin{equation}
\det(A_d-\lambda E)=
\left[
\begin{array}{rrrrr}
a_{11}-\lambda&a_{12}&\dots&a_{1n}&-(a_{11}-\lambda)\\
a_{21}        &a_{22}-\lambda&\dots&0&0\\
a_{31}        &0             &a_{33}-\lambda&0\\
\dots&\dots&\ddots&\dots&\dots\\
a_{n1}  &\dots&\dots&a_{nn}-\lambda&0\\
-\lambda&0    &\dots&0&-\alpha
\end{array}
\right].
\end{equation}
Let us make the following partition of this determinant:
\begin{equation}
\det(A_d-\lambda E)=
\det
\left[
\begin{array}{ccccccc}
&&A-\lambda E&&&|&\left[\begin{array}{c}-(a_{11}-\lambda)\\0\\\vdots\\0\end{array}\right]\\
-&-&-&-&-&|&- - -\\
-\lambda&0&\dots&\dots&0&|&-\alpha
\end{array}
\right]=
\det\left[
\begin{array}{cc}
A-\lambda E&B\\
C&D
\end{array}
\right].
\end{equation}
Applying the Schur theorem \cite[Theorem 3.1.1]{Praszolov}
we get:
\[
\det(A_d-\lambda E)=\det(A-\lambda E)\det(A_d-\lambda E|A-\lambda E),
\]
where \((A_d-\lambda E|A-\lambda E)\) is the Schur-complement of
\(A-\lambda E\)  in \(A_d-\lambda E,\) namely $(A_d-\lambda
E|A-\lambda E)=D-C(A-\lambda E)^{-1}B$ and suppose that \(\lambda\)
is not an eigenvalue of \(A.\)
\begin{eqnarray}
(A_d-\lambda E|A-\lambda E)&=&D-C(A-\lambda E)^{-1}B\nonumber \\
&=&-\alpha -[\begin{array}{cccc}-\lambda&0&\dots&0\end{array}]
(A-\lambda E)^{-1}
\left[\begin{array}{c}-(a_{11}-\lambda)\\0\\\vdots\\0\end{array}\right]\nonumber \\
 &=&-\alpha-\lambda(a_{11}-\lambda)A_{11}^{-1},
\end{eqnarray}
where \(A_{11}^{-1}:=\frac{1}{\det(A-\lambda
E)}(a_{22}-\lambda)\cdot\dots\cdot(a_{nn}-\lambda),\) thus,
\[
\det(A_d-\lambda E|A-\lambda E)=-\alpha-\lambda\frac{(a_{11}-\lambda)\cdot\dots\cdot(a_{nn}-\lambda)}{\det(A-\lambda E)}.
\]
We get the following relation (true for all \(\lambda\in\mathbb{C}\))
\begin{eqnarray}
\det(A_d-\lambda E)&=&-\alpha\det(A-\lambda E)-\lambda(a_{11}-\lambda)\cdot\dots\cdot(a_{nn}-\lambda)\nonumber \\
&=&(-1)(\alpha\det(A-\lambda E)+\lambda\prod_{i=1}^{n}(a_{ii}-\lambda)).
\end{eqnarray}
Now we prove that the coefficients of this polynomial have the same
sign, using the fact that \(A\) being sign stable, hence the
coefficients of \(\det(A-\lambda E)\) have the same sign.
Let us denote the coefficients of \(\det(A-\lambda E)\) by \(a_i,\) namely:
\[
\det(A-\lambda E)=(-\lambda)^n+a_{n-1}(-\lambda)^{n-1}+\dots+a_0.
\]
Thus,
\begin{eqnarray}
\det(A_d-\lambda E)&=&
(-1)\{\alpha(-\lambda)^n+\alpha a_{n-1}(-\lambda)^{n-1}+\dots+\alpha a_0\nonumber\\
&&+\lambda((-\lambda)^n+
(a_{11}+\dots+a_{nn})(-\lambda)^{n-1}\nonumber\\
&&+
(a_{11}a_{22}+\dots+a_{n-1n-1}a_{nn})(-\lambda)^{n-2}\nonumber\\
&&+\dots+
(a_{11}a_{22}\cdot\dots\cdot a_{nn}))\}\nonumber\\
&=&(-\lambda)^{n+1}+
(a_{11}+\dots+a_{nn}-\alpha)(-\lambda)^{n}\nonumber\\
&&+
(a_{11}a_{22}+\dots+a_{n-1n-1}a_{nn}-\alpha a_{n-1})(-\lambda)^{n-1}\nonumber\\
&&+\dots+(a_{11}a_{22}\cdot\dots\cdot a_{nn}-\alpha a_1)(-\lambda)-\alpha a_0.\nonumber
\end{eqnarray}
Since \(\det(A-\lambda E)\) is a stable polynomial, hence if \(n\)
is even, then \(a_{2k}\) is positive, and \(a_{2k+1}\) is negative
for all \(k.\) Thus, the coefficients with even indices of
\(\det(A_d-\lambda E)\) are negative, and those with odd indices are
positive, and all the coefficients of
\((\lambda)^j\quad(j=0,1,\dots,n+1)\) in \(\det(A_d-\lambda E)\) are
negative.

For the case of \(n\) odd we can repeat the above proof.
Thus the necessary condition of stability of the polynomial
\(\det(A_d-\lambda E)\) holds.

This means that if \(\det(A_d-\lambda E)\) is not a stable polynomial then it has to have a
pair of complex conjugate roots with nonnegative real part.

Now let us consider the case when \(\alpha\) is very large. Then the
eigenvalues of \(\det(A_d-\lambda E)\) are close to the eigenvalues
of \(A\) and there is a remaining root with an unknown sign. But
this root should also be a negative real number, because it has no
pair to be a member of a complex conjugate pair, and because the
coefficients of the characteristic polynomial are positive. Thus,
for sufficiently large \(\alpha\gg0\) the matrix \(A_d\) is stable.

If \(\alpha\) is very small then the eigenvalues of
\(\det(A_d-\lambda E)\) are close to the roots of
\(\lambda\prod_{i=1}^{n}(a_{ii}-\lambda)=0.\) It has \(n\) negative
real roots and one more root left with an unknown sign. And again,
this should be a negative real number, because it has no pair to be
a member of a complex conjugate pair, and because the coefficients
of the characteristic polynomial are positive. Thus, for
sufficiently small \(\alpha\neq0\) the matrix \(A_d\) is stable.
This completes the proof of the theorem.
\end{proof}

The meaning of this theorem is the following. If \(\alpha\) is small
then the measure of the influence of the past is large. In this case
the equilibrium point \(E_d^*\) is locally asymptotically stable.

If \(\alpha\) is large then the measure of the influence of the past
is small, the system's behaviour is close to the behaviour of the
system without delay, of which the equilibrium \(E^*\) was stable.
Thus, the results correspond to our expectations. But all these are
true outside the Allée-effect zone, where the stability is stronger
than inside.

\subsection{Numerical examples}

\begin{example}
Let us consider a three dimensional Holling type ratio-dependent
model with delay, namely $g$ is given by (\ref{5g}) and $p_i$ is
given by \eqref{13p}. Let the constants be given as follows:
$m_1=16,\; m_2=18,\; d_1=8,\; d_2=12,\; a_1=4,\; a_2=2,\; K=0.1.$
The equilibrium point of the system depending on $r$ is
$E^*=\left(0.1(1-\frac{5}{r}), \frac{1}{40}(1-\frac{5}{r}),
\frac{1}{40}(1-\frac{5}{r})\right).$ In this case the interaction
matrix of the system without delay is given by:
\begin{equation}
A=\left[\begin{array}{ccc} 8-r & -4 & -8\\
1 & -4 & 0\\
1 & 0 & -4
\end{array}
\right].
\end{equation}
The characteristic polynomial of $A$ is:
$D(\lambda)=(-4-\lambda)\left(\lambda^2+(r-4)\lambda+4(r-5)\right).$
This is a stable polynomial for $r>5$ and $A$ is sign stable for
$r\geq 8$
.\\
The equilibrium point of the delayed system depending on $r$ is
$${E_d}^*=\left(0.1(1-\frac{5}{r}), \frac{1}{40}(1-\frac{5}{r}),
\frac{1}{40}(1-\frac{5}{r}), 0.1(1-\frac{1}{r})\right).$$
The
coefficient matrix of the delayed system linearized at ${E_d}^*$
is
\begin{equation}
A=\left[\begin{array}{cccc} 8-r & -4 & -8 & 0\\
0 & -4 & 0& 1\\
0 & 0 & -4& 1\\
\alpha & 0&0& -\alpha
\end{array}
\right].
\end{equation}
The characteristic polynomial of $A_d$ is:
$D_d(\lambda)=(-4-\lambda)\left((8-r-\lambda)(-4-\lambda)(-\alpha-\lambda)-12\alpha\right).$

Let us check conditions (\ref{stdelay1}), (\ref{stdelay2}).
It is easy to see that in case of $r>12$ these are satisfied. The
conditions of Theorem \ref{main3} hold, ${E_d}^*$ is
asymptotically stable.
Time evolution of the species
are shown on the left side of Fig. \ref{r13alfa1},
whereas the right side shows the
corresponding trajectory together with the equilibrium point.
\begin{figure}[!ht]
\begin{center}
\includegraphics{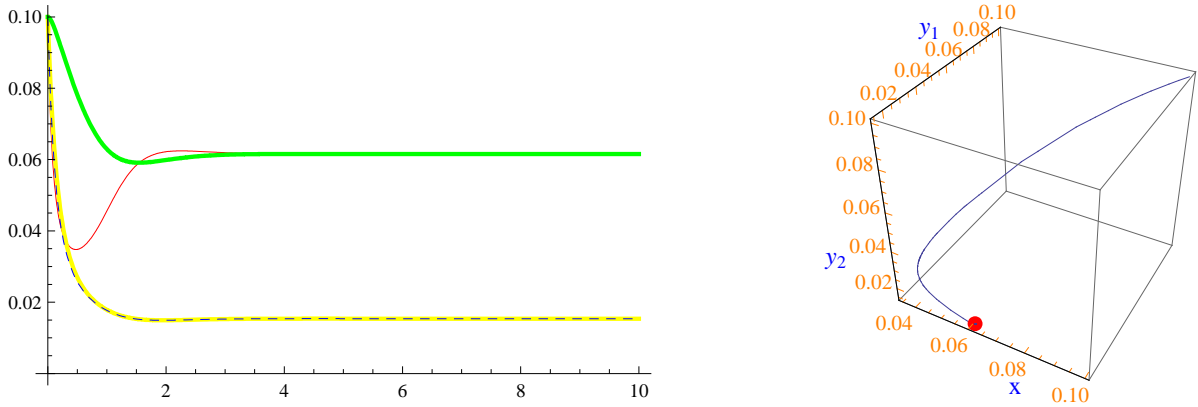}
\caption{\label{r13alfa1} Left:
Time evolution of the species in case of
\(r=13,\;\alpha=1\).
Right:
The trajectory tends to the asymptotically stable equilibrium point.
(\(x\) is red, \(q\) is green, \(y_1\) is dashed blue,
\(y_2\) is yellow.)}
\end{center}
\end{figure}
The form of \eqref{Hnalpha} with \(r=13\)
is shown in Fig. \ref{r13harmadfoku}.
This corresponds to Fig. \ref{pozpoz}, case 1c.
\begin{figure}[!ht]
\begin{center}
\includegraphics{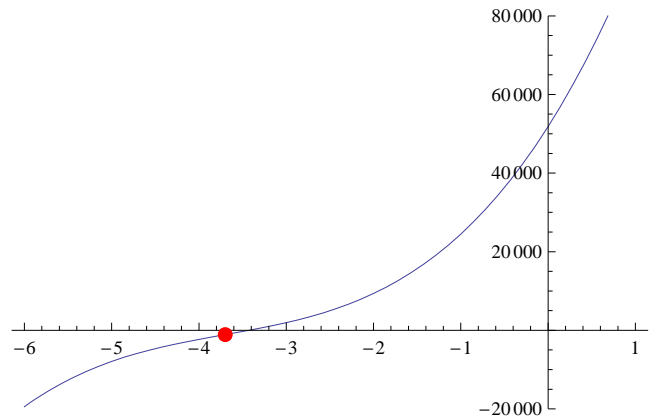}
\caption{\label{r13harmadfoku}
The function \eqref{Hnalpha} with \(r=13\)}
\end{center}
\end{figure}
It is easy to see that the equilibrium point of the delay system
remains asymptotically stable for any \(\alpha>0.\)
We note that in this case the equilibrium point is outside the All\'ee-effect zone,
see Fig. \ref{r10feliratnelkuli}.

If $12\geq r>5$ then conditions (\ref{stdelay1}),
(\ref{stdelay2}) are not valid, and there are such cases when
${E_d}^*$ is stable and there are cases when it is unstable.
Time evolution of the species
are shown on the left side of Fig. \ref{r7alfa1rovid},
whereas the right side shows the
corresponding trajectory together with the equilibrium point.
\begin{figure}[!ht]
\begin{center}
\includegraphics{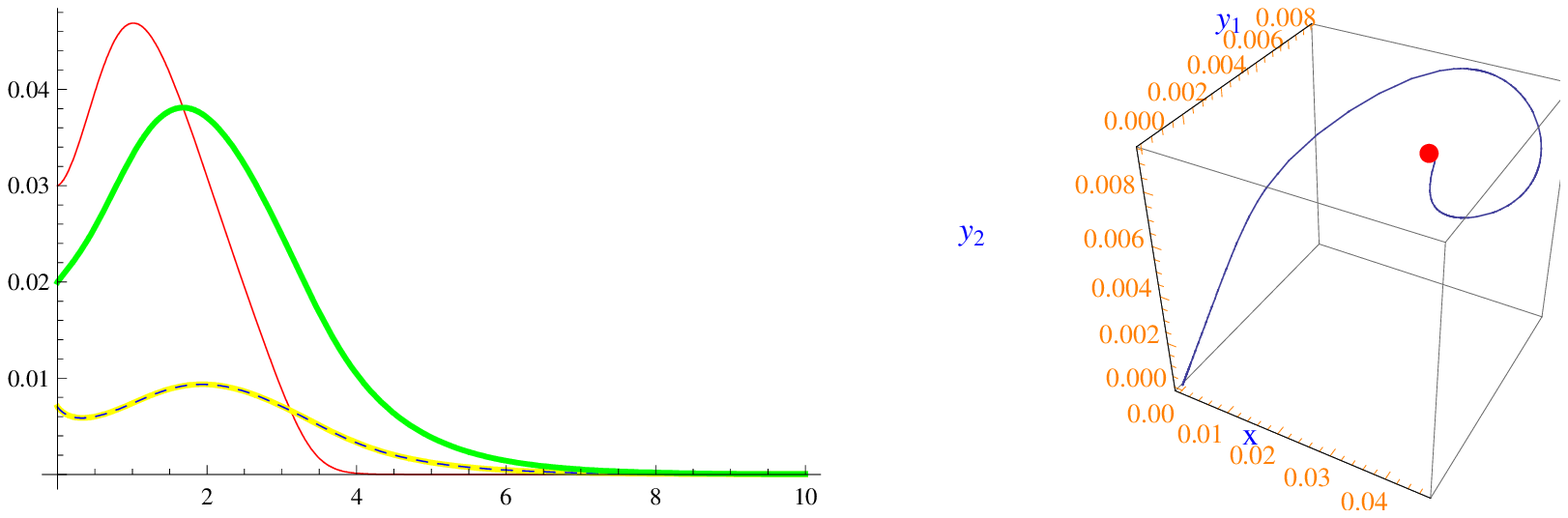}
\caption{\label{r7alfa1rovid} Left:
Time evolution of the species in case of
\(r=7,\;\alpha=1\).
Right:
The trajectory leaves the neighbourhood
of the unstable equilibrium point.
(\(x\) is red, \(q\) is green, \(y_1\) is dashed blue,
\(y_2\) is yellow.)}
\end{center}
\end{figure}
The form of \eqref{Hnalpha} with \(r=7\)
is shown in Fig. \ref{r7harmadfoku}.
\begin{figure}[!ht]
\begin{center}
\includegraphics{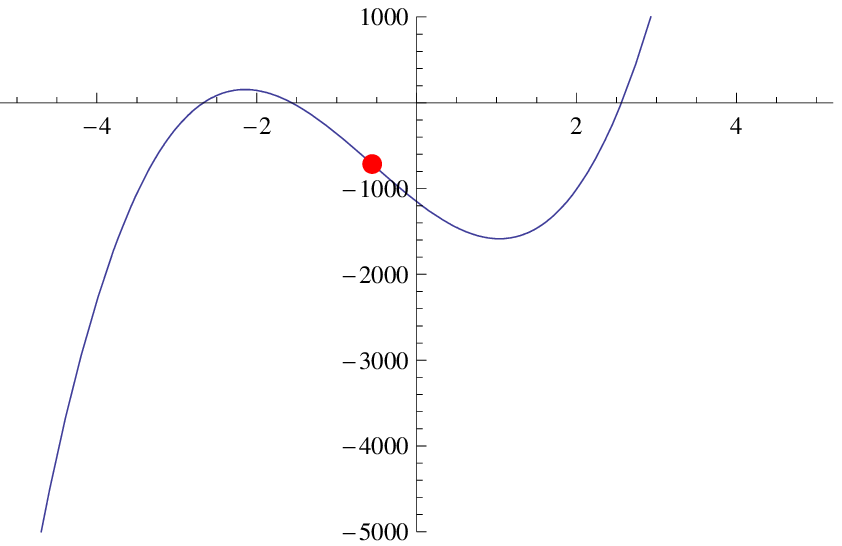}
\caption{\label{r7harmadfoku}
The function \eqref{Hnalpha} with \(r=7\)}
\end{center}
\end{figure}
It is easy to see that there are values of \(\alpha\)
for which \(H(\alpha)<0,\)
thus,
the equilibrium point of the delay system
is unstable, and also values for which \(H(\alpha)>0,\)
thus,
the equilibrium point of the delay system
is asymptotically stable.
We note that in this case the equilibrium point is inside
the All\'ee-effect zone,
see Fig. \ref{r7feliratnelkuli}.
\end{example}

Of course this study is not complete.
There are many interesting trajectories, periodic orbits, see e.
g. Fig. \ref{r7alfa1tuskes}, \ref{r8alfa02}.
\begin{figure}[!ht]
\begin{center}
\includegraphics{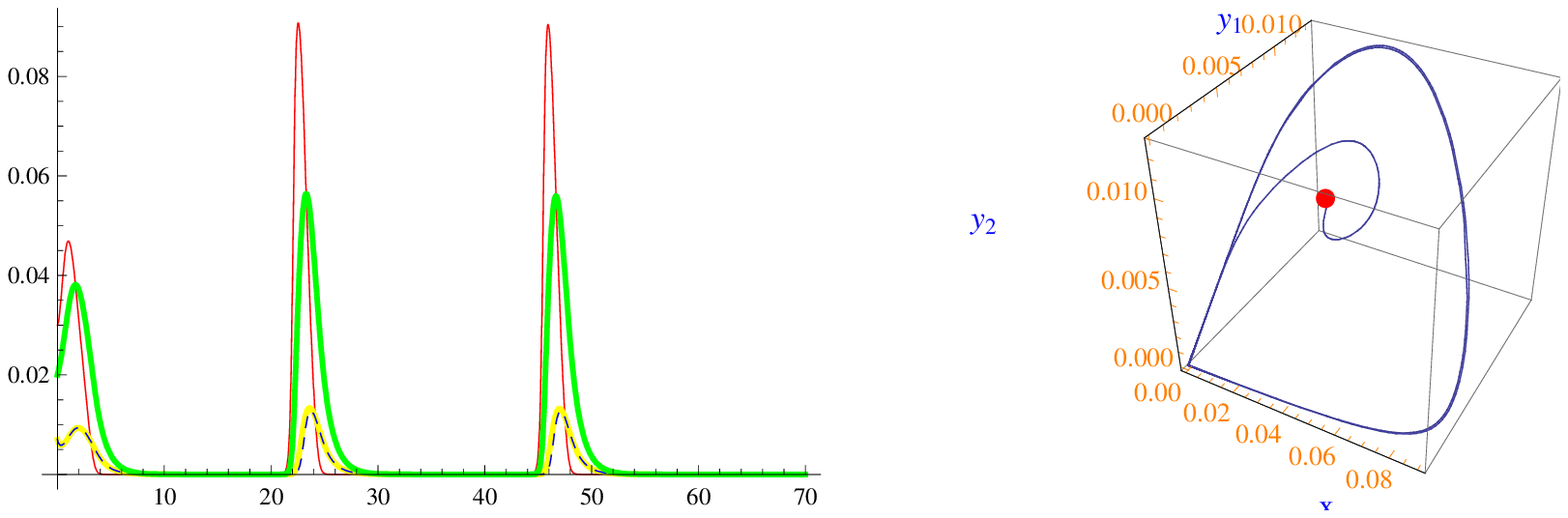}
\caption{\label{r7alfa1tuskes} Left:
Time evolution of the species in case of
\(r=7,\;\alpha=1\).
The solution seems to be periodic at first sight.
(The reason of this phenomenon may also be numerical errors.)
Right:
The corresponding trajectory.
(\(x\) is red, \(q\) is green, \(y_1\) is dashed blue,
\(y_2\) is yellow.)}
\end{center}
\end{figure}
\begin{figure}[!ht]
\begin{center}
\includegraphics{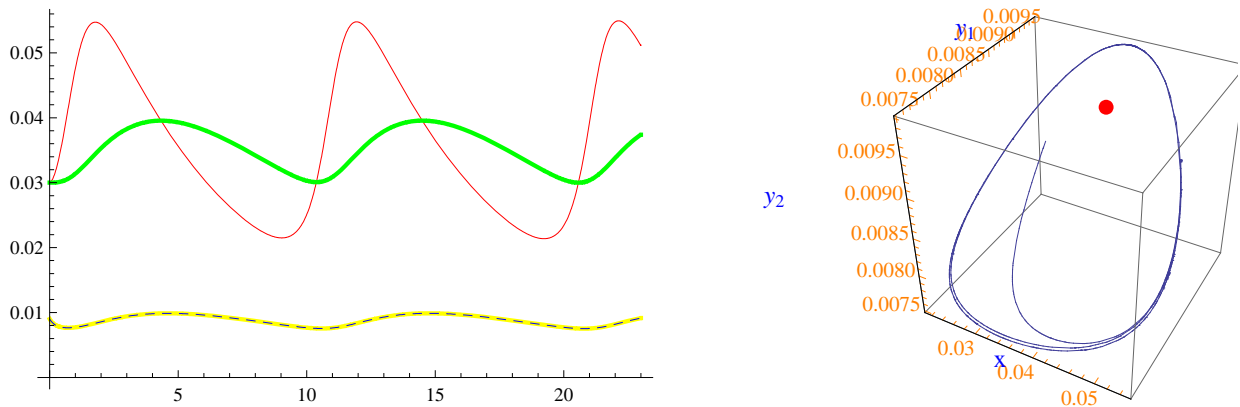}
\caption{\label{r8alfa02} Left:
Seemingly time periodic evolution of the species in case of
\(r=8,\;\alpha=0.2\).
Right:
The corresponding periodic orbit.
(\(x\) is red, \(q\) is green, \(y_1\) is dashed blue,
\(y_2\) is yellow.)}
\end{center}
\end{figure}

The interested reader can experiment with the parameters
and initial conditions of the model using the \textit{Mathematica}
program on the page http://www.math.bme.hu/\~{}jtoth\index.html\#kktj.
E. g. it is also interesting how the trajectories change
if we reduce $a_i.$
In case of $r\leq 5$ there is no positive equilibrium point $E_d^*.$

The mentioned program produces figures like Fig. \ref{Manip}.
\begin{figure}[!ht]
\begin{center}
\includegraphics{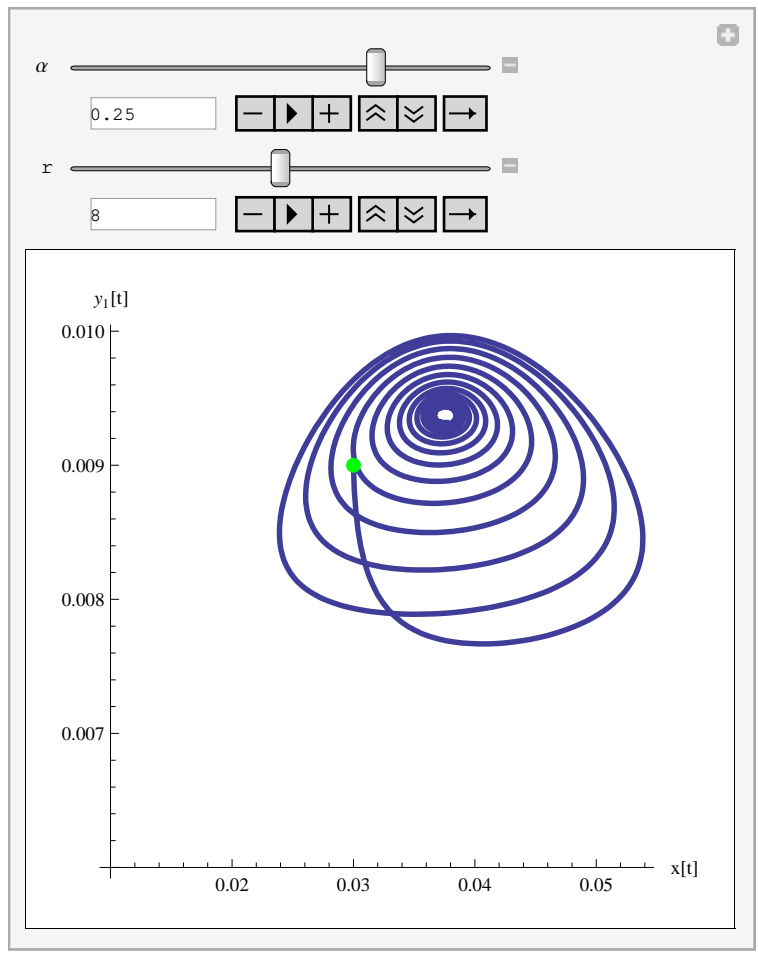}
\caption{\label{Manip}Snapshot of manipulation.  \(r=8\) and  \(\alpha=0.25.\)}
\end{center}
\end{figure}
In case of an Ivlev model similar situations may occur.

\noindent\textbf{Acknowledgement}\\
This work is partly the generalization of a paper of Cavani and
Farkas \cite{MC-FM1}. The first author was a student of   the late Prof.
Mikl\'os Farkas of Budapest University of Technology and Economics.
They worked  together for more than twenty years.
Prof. Mikl\'os Farkas regrettably died on the 28th of August 2007.
She is eternally thankful to him for his
precious ideas and comments throughout so many years.
The second author really regrets not having learned
more from Professor Farkas.
The authors are honored to have known him, and remember him with great fondness,
love and gratitude.

The present work has partially been supported by the
National Science Foundation, Hungary (K63066).

\clearpage

\end{document}